\newtheorem{thm}{Theorem}[section]
\newtheorem{lem}[thm]{Lemma}
\newtheorem{prop}[thm]{Proposition}
\theoremstyle{definition}
\theoremstyle{remark}
\newtheorem{rem}[thm]{Remark}
\newcommand{\re}{\mathbb R}
\newcommand{\pd}{\partial}
\newcommand{\ep}{\varepsilon}
\newcommand{\eqal}[1]{\begin{equation}\begin{aligned}#1\end{aligned}\end{equation}}
\newcommand{\es}[1]{\begin{align*}#1\end{align*}}
\appto\appendix{\counterwithin{equation}{section}}
\title[Semi-convex viscosity solutions]{Semi-convex viscosity solutions of the special Lagrangian equation}
\author{Connor Mooney and Ravi Shankar}
\date{\today}
\begin{document}

\begin{abstract}
We prove smoothness and interior derivative estimates for viscosity solutions to the special Lagrangian equation with almost negative phases and small enough semi-convexity. We show by example that the range of phases we consider and the semi-convexity condition are sharp. As an application, we find a new Liouville theorem for entire such solutions of the special Lagrangian equation with subcritical phase. We also find effective Hessian estimates with exponential dependence, which we show to be optimal.
\end{abstract}

\maketitle

\section{Introduction}

A function $u$ on a domain in $\re^n$ solves the special Lagrangian equation if
\eqal{
\label{eq}
F(D^2u) := \sum_{i=1}^n\tan^{-1}\lambda_i = \Theta,
}
where $\lambda_i$ are the eigenvalues of $D^2u$, and the phase $\Theta\in(-n\pi/2,n\pi/2)$ is a constant.  In the case that $u\in C^{1,1}$, the gradient graph $\{(x,Du(x))\} \subset \re^n\times\re^n$ is minimal, by Harvey and Lawson's calibration argument \cite{HL}.

\smallskip
The regularity question for viscosity solutions of the fully nonlinear, degenerate elliptic PDE \eqref{eq} is delicate. In two dimensions, regularity was demonstrated by Heinz in the 1950's \cite{H}.  The Hessian estimate also follows from Gregori in 1994 \cite{G}.  For critical and supercritical $|\Theta|\ge (n-2)\pi/2$ phases or convex solutions, the level set of $F$ is convex \cite{Y3}.  For such phases in general dimension \cite{WdY2}, or for convex viscosity solutions \cite{CSY}, it is known that $u$ is analytic.  However, the first-named author and Savin \cite{MS} constructed Lipschitz but not $C^1$, semi-convex viscosity solutions in dimension $n \geq 3$ whose gradient graphs are non-minimal.  Earlier, $C^{1,\alpha}$ singular semi-convex viscosity solutions were constructed in dimension $n \geq 3$ by Nadirashvili-Vladut and Wang-Yuan \cite{NV1,WdY1}. The gradient graphs of the latter examples are analytic and minimal.

\smallskip
In this paper, we ask when the gradient graph of a semi-convex viscosity solution is minimal.  It turns out that for almost negative, subcritical phases and small enough semi-convexity, we can show that viscosity solutions are analytic, with a linear exponential bound for the Hessian. We show that the almost-negativity of the phase and the semi-convexity are sharp, by generalizing the examples from \cite{MS}. We also show that the linear exponential dependence in the Hessian bound is optimal.

\begin{thm}[Regularity]\label{thm:reg}
Let $\Theta \in (-(n-2)\pi/2,\,\pi/2)$
and define
\eqal{
\label{theta}
\theta := \frac{\pi/2 - \Theta}{n-1} \in (0,\,\pi/2).
}
Assume that $u$ is a viscosity solution in $B_1 \subset \mathbb{R}^n$ to
$$F(D^2u) = \Theta,$$
and satisfies in addition that
\begin{equation}\label{SemiconvexBound}
u + \frac{1}{2}\tan(\theta)|x|^2 \text{ is convex}.
\end{equation}
Then $u$ is analytic, and moreover we have for $k \geq 2$ that
\begin{equation}\label{HessianBound}
|D^ku(0)| \leq e^{C(n,\,k,\,\Theta)\left(1 + \|Du\|_{L^{\infty}(B_1)}\right)}.
\end{equation}
\end{thm}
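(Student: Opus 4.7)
The plan is to apply a Lewy--Yuan rotation of the gradient graph, converting $u$ into a convex viscosity solution $\bar u$ of the special Lagrangian equation with a rotated phase, and then invoke the analyticity and interior Hessian estimates for convex solutions from \cite{CSY}.

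\textbf{The rotation.} The semi-convexity hypothesis \eqref{SemiconvexBound} means that in the viscosity sense, each eigenvalue of $D^2u$ satisfies $\arctan\lambda_i \geq -\theta$. For smooth $u$, the rotation by angle $\theta$ defined by $\bar x = \cos\theta\, x - \sin\theta\, Du(x)$ and $D\bar u(\bar x) = \sin\theta\, x + \cos\theta\, Du(x)$ shifts each eigenvalue angle by $+\theta$, giving $\arctan\bar\lambda_i = \arctan\lambda_i + \theta \geq 0$. Thus $\bar u$ is convex and solves $F(D^2\bar u) = \bar\Theta$, where
\[
\bar\Theta = \Theta + n\theta = \frac{n\pi/2 - \Theta}{n-1} \in (\pi/2,\, \pi),
\]
provided the rotation Jacobian $\cos\theta\, I - \sin\theta\, D^2 u$ is positive definite.

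\textbf{Viscosity justification and conclusion.} For a viscosity solution $u$ with only the semi-convexity hypothesis, I would justify the rotation by approximation: regularize $u$ (e.g., via mollification or sup-convolution) to obtain $u_\epsilon$ preserving the semi-convexity up to a vanishing loss, rotate each $u_\epsilon$ using a slightly adjusted angle keeping the Jacobian uniformly positive definite, and pass to the limit using stability of viscosity solutions. The limit $\bar u$ is a convex viscosity solution of $F(D^2\bar u) = \bar\Theta$. By \cite{CSY}, $\bar u$ is analytic with an interior Hessian estimate of the form $|D^k\bar u(0)| \leq \exp(C(n, k, \bar\Theta)(1 + \|D\bar u\|_\infty))$. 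Since the rotation is analytic with bounded derivatives in terms of $\theta$, and $\|D\bar u\|_\infty \leq 1 + \|Du\|_\infty$ on the image of $B_1$, the chain rule transfers both the analyticity and the estimate to $u$, yielding \eqref{HessianBound}.

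\textbf{Main obstacle.} The core difficulty lies in justifying the Lewy--Yuan rotation rigorously for viscosity solutions, and in particular controlling the potential degeneracy of the rotation Jacobian: combining the hypothesis with the equation one obtains only $\arctan\lambda_i \leq \pi/2$, not the stronger bound $\leq \pi/2 - \theta$ that would a priori rule out degeneracy (indeed the saturated case $\arctan\lambda_j = -\theta$ for $j\neq i$ forces $\lambda_i \to \cot\theta$). The heart of the argument will therefore be a careful approximation, exploiting the two-sided bound on $D^2 u_\epsilon$ provided by the regularization, together with uniform stability estimates as the regularization is removed.
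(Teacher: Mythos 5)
Your plan has a genuine gap, and it sits exactly where you flagged ``the main obstacle'': the rotation you propose goes in the wrong direction and cannot be performed. The upward rotation $\bar x = \cos\theta\, x - \sin\theta\, Du(x)$ requires the Jacobian $\cos\theta\, I - \sin\theta\, D^2u$ to be positive definite, i.e.\ $\tan^{-1}\lambda_i < \pi/2 - \theta$, which is an a priori upper Hessian bound -- essentially the $C^{1,1}$ regularity you are trying to prove. No regularization scheme rescues this: mollification does not preserve viscosity solutions of a fully nonlinear non-concave equation, sup-convolution preserves only subsolutions, and the sharpness examples (Theorem \ref{thm:Ex}) show that under slightly weaker semi-convexity the solution is Lipschitz but not $C^1$, so its Hessian is genuinely unbounded and the upward rotation map is not injective. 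The paper instead rotates \emph{downward} by an angle $\phi<\pi/2-\theta$, which is always well defined for $\tan\theta$-semi-convex functions since the Jacobian is $\cos\phi\, I + \sin\phi\, D^2u \ge (\cos\phi - \sin\phi\tan\theta)I>0$. The real work is then (i) proving that the rotated function is still a viscosity \emph{subsolution} (Proposition \ref{prop:rot}) -- this is a pointwise touching-quadratic argument that uses the arithmetic $\pi/2-\phi+(n-1)(-\theta-\phi)=\Theta-n\phi$ and is exactly where the hypotheses $\Theta<\pi/2$ and $D^2u\ge-\tan(\theta)I$ enter sharply -- and (ii) an eigenvalue-rigidity/constant-rank argument (Section \ref{sec:reg}) showing the rotated Hessian never reaches $\tan(\pi/2-\phi)$, which is what lets you undo the rotation and recover regularity of $u$. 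Your scheme skips both of these, and neither is routine: the rotation of a general semi-convex viscosity solution need \emph{not} solve the rotated equation (the \cite{MS} examples fail this on an open set).

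A secondary issue: even granting the rotation, citing \cite{CSY} does not yield the effective bound \eqref{HessianBound}. The known quantitative Hessian estimates in the convex/supercritical settings depend exponentially on $\|Du\|^{2(n-1)}$, not linearly on $\|Du\|$; the linear exponential dependence here comes from a separate quantified Harnack-chain argument in rotated variables (using the volume bound \eqref{VolBound}, which itself relies on the eigenvalue bound \eqref{Lambda2} special to this phase range). So the estimate, like the regularity, requires input beyond what your reduction would provide.
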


We recall that regularity is true in the critical and supercritical cases $|\Theta| \geq (n-2)\pi/2$, and for convex solutions. It is natural to ask whether any negative lower bound on the Hessian will imply regularity for the subcritical phases $\Theta \in [\pi/2,\, (n-2)\pi/2)$, and whether the lower bound we assumed in Theorem \ref{thm:reg} can be lowered. We show that the answer to both questions is ``no":

\begin{thm}[Sharpness of assumptions]\label{thm:Ex}
For any $\Theta \in [\pi/2,\, (n-2)\pi/2)$ and $\epsilon > 0$, there exist singular viscosity solutions to
$$F(D^2u) = \Theta$$
such that $u + \epsilon |x|^2$ is convex. 

For any $\Theta \in (-(n-2)\pi/2,\,\pi/2)$ and $\epsilon > 0$, there exist singular viscosity solutions to
$$F(D^2u) = \Theta$$
such that $u + \frac{1}{2}(\tan\theta + \epsilon)|x|^2$ is convex.

In both cases, the examples may be taken to be Lipschitz but not $C^1$, and to have non-minimal gradient graph.
\end{thm}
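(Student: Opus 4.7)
The plan is to construct singular viscosity solutions explicitly, generalizing the Lipschitz-but-not-$C^1$ examples of \cite{MS}. The construction is guided by a sharp eigenvalue analysis. For the second claim, observe that with semi-convexity bound $\lambda_i \geq -\tan\theta$, the phase equation $\sum \tan^{-1} \lambda_i = \Theta$ can be saturated only by a degenerate eigenvalue configuration: one eigenvalue at $+\infty$ and the other $n-1$ at $-\tan\theta$, giving phase $\pi/2 + (n-1)\tan^{-1}(-\tan\theta) = \pi/2 - (n-1)\theta = \Theta$. Allowing semi-convexity $\tan\theta + \epsilon$ gives room to perturb. The examples will be engineered so that $u$ exhibits this near-extremal eigenvalue configuration at a singular origin, with a large eigenvalue (corresponding to the non-$C^1$ wedge) in one direction and eigenvalues near $-\tan\theta$ in the perpendicular directions.

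Concretely, I would use a partially rotationally symmetric ansatz $u(x) = w(|x'|, x_n)$ with $(x', x_n) \in \re^{n-1} \times \re$. Setting $s = |x'|$, the Hessian eigenvalues of $u$ split into the two eigenvalues of the $2 \times 2$ Hessian of $w$ in $(s, x_n)$ together with $n-2$ copies of $w_s/s$. Arranging $w_s(0,0) > 0$ forces $w_s/s \to +\infty$ at the origin, producing the non-$C^1$ singularity, while suitable quadratic terms yield the $-\tan\theta$ eigenvalues in the remaining directions. The special Lagrangian equation then reduces to a two-dimensional PDE for $w$, which I would solve via ODE analysis (generalizing \cite{MS}). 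For the first statement ($\Theta \in [\pi/2, (n-2)\pi/2)$), the more permissive phase leaves additional slack, so that the semi-convexity can be relaxed to any $\epsilon > 0$.

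Verification of the viscosity property is the main technical work. Away from the singular set, $u$ will solve the PDE classically by construction. At the origin the subsolution inequality is vacuous, since $u$ admits no upper-touching paraboloid due to the non-$C^1$ wedge. The supersolution inequality follows by checking that admissible lower-touching paraboloid Hessians $M$ satisfy $\sum \tan^{-1} \lambda_i(M) \leq \Theta$, using the eigenvalue analysis above together with the semi-convexity bound. Non-minimality of the gradient graph follows from the presence of a genuinely conical piece at the origin. The main obstacle is covering the full phase range; for $\Theta$ far from $\pi/2$, this may require iterating the rotational reduction (further symmetrizing $u$) so that the reduced two-dimensional block only needs to handle a smaller residual phase.
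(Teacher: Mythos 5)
Your guiding eigenvalue analysis is right, and it matches the heuristic behind the paper's construction: the extremal configuration has exactly \emph{one} eigenvalue at $+\infty$ (contributing $\pi/2$) and the other $n-1$ pinned at $-\tan\theta$. But your ansatz $u=w(|x'|,x_n)$ with $x'\in\re^{n-1}$ cannot realize this configuration for $n\ge 4$, and this is a fatal gap. The angular directions contribute $n-2$ copies of the eigenvalue $w_s/s$, so arranging $w_s(0,\cdot)>0$ forces $n-2$ eigenvalues to blow up \emph{simultaneously} on the singular set. The phase there is then $(n-2)\pi/2+\tan^{-1}\mu_1+\tan^{-1}\mu_2$ for the two eigenvalues $\mu_1,\mu_2$ of the reduced block, so the equation forces $\tan^{-1}\mu_1+\tan^{-1}\mu_2=\Theta-(n-2)\pi/2$, while the claimed semi-convexity forces $\tan^{-1}\mu_i\ge-\theta=-(\pi/2-\Theta)/(n-1)$. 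These two constraints are incompatible for every $\Theta<\pi/2$ once $n\ge 4$ (they reduce to $(n-3)\Theta\ge n(n-3)\pi/2$). ``Further symmetrizing'' only makes more eigenvalues blow up together and worsens the mismatch. To demonstrate sharpness one needs a singular set across which only \emph{one} eigenvalue degenerates, i.e.\ a codimension-one Lipschitz fold, which is exactly what the paper builds: it takes a seed $\Phi$ whose Hessian has rank $n-1$, extends it by Cauchy--Kovalevskaya to a $C^{2,1}$ function $w$ whose determinant changes sign outside the level set $K_\epsilon$, and sets $u=-w^*$; the image $\Gamma=\nabla w(\overline{K_\epsilon})$ is a hypersurface along which $u$ has an upward Lipschitz wedge and exactly one Hessian eigenvalue near $+\infty$, with the remaining eigenvalues tunable to $-\tan\theta+O(\epsilon)$ via the parameters $\lambda,a_i$.

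Two further points would need repair even in dimension $3$. First, the supersolution property on the singular set is not safely handled by direct paraboloid analysis; the paper verifies it by exhibiting $u$ as a uniform limit of smooth supersolutions $-(w-x_3^2/k)^*$. Second, non-minimality of the gradient graph does \emph{not} follow from the mere presence of a conical singularity: the Nadirashvili--Vl\u{a}du\c{t} and Wang--Yuan examples have conical singularities whose gradient graphs are analytic and minimal. Non-minimality in these examples comes from the $n$-dimensional vertical piece of the graph sitting over the codimension-one fold $\Gamma$, which your point/axis singularity does not obviously produce.
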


It is also natural to ask whether the exponential dependence on $\|Du\|_{L^{\infty}}$ in the effective bound (\ref{HessianBound}) can be improved. We supply a negative answer:

\begin{thm}[Sharpness of effective bound]\label{thm:Ex2}
There exist smooth solutions to (\ref{eq}) in $B_1 \subset \mathbb{R}^n$ showing that an effective bound for $|D^2u(0)|$ depending exponentially on $\|Du\|_{L^{\infty}(B_1)}$ is the best one can expect in either of the cases:
\begin{enumerate}
\item $u$ satisfies the conditions of Theorem \ref{thm:reg},
\item $u$ is convex and $\Theta \geq \frac{\pi}{2}$.
\end{enumerate}
\end{thm}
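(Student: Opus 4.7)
The goal is to produce, for each large parameter $M$, a family of explicit smooth solutions $u_M$ to \eqref{eq} in $B_1$ satisfying the hypotheses of each case and for which
\[
|D^2u_M(0)| \gtrsim M, \qquad \|Du_M\|_{L^{\infty}(B_1)} \lesssim \log M,
\]
so that no bound $|D^2u(0)| \leq P(\|Du\|_{L^{\infty}(B_1)})$ with $P$ sub-exponential can hold. I would first build the example for Case (2) and then obtain Case (1) by a $U(n)$ rotation of the Lagrangian graph.

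For Case (2) (convex, $\Theta \geq \pi/2$), my starting point is a Pogorelov-type separable ansatz of the form $u_M(x_1,x') = f_M(x_1)\,g(x')$ with $x' = (x_2,\ldots,x_n)$ and $g$ a fixed positive quadratic (e.g.\ $g(x') = 1 + |x'|^2$), or a refinement $f_M(x_1) + \tfrac{1}{2}h_M(x_1)|x'|^2 + \ldots$. Substituting into $F(D^2u_M) = \Theta$ reduces the special Lagrangian equation to a nonlinear ODE system in $x_1$; I would analyze the phase portrait to exhibit a one-parameter family of smooth convex solutions on $(-1,1)$ for which $f_M''(0)$ can be made arbitrarily large while $f_M, f_M'$ remain bounded on $[-1,1]$ by $O(\log M)$, ultimately translating into the desired Hessian/gradient scaling. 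For $\Theta$ strictly greater than $(n-2)\pi/2$, the construction is then adjusted by tensoring with additional quadratic terms $\tfrac{1}{2}\tan(\alpha_i)\,x_i^2$, chosen to bring the phase to the target value.

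For Case (1) (semi-convex, $\Theta \in (-(n-2)\pi/2,\,\pi/2)$), I would apply a $U(n)$ rotation to the Case (2) family. Rotating the gradient graph $\{(x,\,Du_M(x))\} \subset \mathbb{C}^n$ by $e^{-i\alpha}$ produces a new Lagrangian graph, still a gradient graph of some smooth potential $\tilde u_M$, with phase shifted to $\Theta_0 - n\alpha$ and Hessian eigenvalues transforming as $\lambda \mapsto (\lambda - \tan\alpha)/(1+\lambda\tan\alpha)$. Choosing $\alpha$ so that $\Theta_0 - n\alpha = \Theta$ lies in the semi-convex range, the convex eigenvalues of $D^2u_M$ map into $(-\tan\alpha,\infty)$, producing exactly a lower bound of the form demanded in \eqref{SemiconvexBound}. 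A careful comparison of $\alpha$ with $\theta = (\pi/2 - \Theta)/(n-1)$, together with the known bi-Lipschitz behavior of the rotation on the gradient graph, would yield the semi-convexity hypothesis and preserve the exponential-vs-gradient scaling of the Hessian at the origin.

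The hardest step is the Case (2) construction itself: one must produce exact smooth (not merely $C^{1,1}$) solutions to the special Lagrangian equation for which $\|Du_M\|_{L^{\infty}(B_1)}$ really does grow only logarithmically in $|D^2u_M(0)|$, which requires careful ODE analysis (the classical singular Pogorelov ansatz only yields a $C^{1,1}$ limit). Secondary obstacles include verifying that the rotation in Case (1) is a global diffeomorphism of the base and that it produces the exact sharp semi-convexity constant $\tan\theta$ rather than merely a comparable one; both should follow from an inverse function argument together with the eigenvalue transformation formula above.
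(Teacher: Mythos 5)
Your proposal has two genuine gaps. The first is that the core construction (Case (2)) is never actually produced: you reduce everything to exhibiting a smooth convex family with $|D^2u_M(0)|\gtrsim M$ and $\|Du_M\|_{L^\infty}\lesssim \log M$ via a separable/Pogorelov-type ansatz and a phase-portrait analysis, but you yourself flag this as the hardest step and do not carry it out. The special Lagrangian equation does not reduce cleanly to an ODE under the ansatz $f(x_1)g(x')$, and the classical Pogorelov ansatz, as you note, only degenerates to a $C^{1,1}$ object. The paper sidesteps all of this with a completely explicit two-dimensional example: the partial Legendre transform in $x$ of the small harmonic function $e^{-M}\cosh(x)\cos(y)$, namely $u(x,y)=e^{-M}\cos(y)\,g(e^Mx/\cos(y))$ with $g=\cosh^*$, which satisfies $\det D^2u=1$ (i.e.\ $\Theta=\pi/2$ in $n=2$), $\|Du\|_{L^\infty}\le CM$, and $u_{xx}(0,0)=e^M$. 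Higher dimensions and other phases are then reached by adding quadratics $\pm\tfrac12\tan(\alpha_i)x_i^2$ in the extra variables, much as you suggest for adjusting the phase.

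The second gap is fatal as stated: deriving Case (1) from Case (2) by a full $U(n)$ rotation $e^{-i\alpha}$ of the gradient graph destroys the example. Under your own transformation law $\lambda\mapsto(\lambda-\tan\alpha)/(1+\lambda\tan\alpha)$, the large eigenvalue $\lambda_1(0)\sim e^M$ is sent to approximately $\cot\alpha$, which is bounded uniformly in $M$; indeed, making the Hessian bounded is precisely the purpose of the Lewy--Yuan rotation in the regularity proof. So the rotated family has uniformly bounded Hessian at the origin and cannot witness sharpness of any effective bound. The paper avoids this by rotating only \emph{partially}, in the $y$-variable of the two-dimensional example, leaving the blow-up direction $x_1$ untouched so that $\bar u_{11}(0,0)=u_{xx}(0,0)=e^M$ survives; the semi-convex higher-dimensional example is then $w=\bar u(x_1,x_2)-\tan(\theta)\sum_{i>2}x_i^2/2$, which meets the exact semi-convexity constant \eqref{SemiconvexBound} by construction rather than by a rotation of a convex solution. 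If you want to salvage your route, you must replace the full $U(n)$ rotation by a partial rotation that fixes the direction in which the Hessian degenerates.
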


Finally, as an application of Theorem \ref{thm:reg}, we find a new Liouville theorem for entire viscosity solutions.  
\begin{thm}[Liouville]\label{thm:liou}
    Let $\Theta\in(-(n-2)\pi/2,\pi/2)$, and $\theta(n,\Theta)$ be as in \eqref{theta}.  Assume that $u$ is a viscosity solution on $\re^n$ to 
    $$
    F(D^2u)=\Theta,
    $$
    and satisfies $\tan\theta$-convexity \eqref{SemiconvexBound}.  Then $u$ is a quadratic polynomial.
\end{thm}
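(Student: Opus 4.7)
The plan is to combine the interior estimate of Theorem~\ref{thm:reg} with a rescaling argument to force all partial derivatives of $u$ of order $\geq 3$ to vanish. For $R > 0$, set $u_R(x) := R^{-2}u(Rx)$; since $D^2 u_R(x) = D^2u(Rx)$, the function $u_R$ also solves $F(D^2 u_R)=\Theta$, and the $\tan(\theta)$-convexity \eqref{SemiconvexBound} is preserved under this parabolic rescaling. Applying Theorem~\ref{thm:reg} to $u_R$ on $B_1$ and unscaling via $D^k u_R(0) = R^{k-2}D^k u(0)$ and $\|Du_R\|_{L^\infty(B_1)} = R^{-1}\|Du\|_{L^\infty(B_R)}$, we obtain for every $k\ge 2$,
\[
|D^k u(0)| \;\le\; R^{2-k}\exp\!\Bigl(C(n,k,\Theta)\bigl(1+R^{-1}\|Du\|_{L^\infty(B_R)}\bigr)\Bigr).
\]

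The heart of the argument is then to prove the linear gradient bound $\|Du\|_{L^\infty(B_R)}\le C(1+R)$ for all $R\ge 1$. Granted this, the exponential factor above stays bounded as $R\to\infty$ while $R^{2-k}\to 0$ for every $k\ge 3$, forcing $D^k u(0)=0$. Translation invariance of both the equation and of \eqref{SemiconvexBound} means the same conclusion holds at every base point, so $D^k u\equiv 0$ on $\re^n$ for all $k\ge 3$, and $u$ is a quadratic polynomial.

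To establish the linear gradient bound, first subtract an affine function (which preserves all hypotheses) so that $u(0)=Du(0)=0$; semi-convexity then immediately yields the one-sided bound $u(x)\ge -\tfrac{1}{2}\tan(\theta)|x|^2$. The key remaining step is the matching quadratic upper bound $u(x)\le C(1+|x|^2)$: once this is in hand, the convex function $w := u + \tfrac{1}{2}\tan(\theta)|x|^2$ has quadratic growth, and the elementary convexity estimate $|Dw(x_0)|\le R^{-1}\operatorname{osc}_{B_R(x_0)}w$ optimized at $R\sim 1+|x_0|$ produces $|Du(x_0)|\le C(1+|x_0|)$. For the quadratic upper bound on $u$ itself, I would compare $u$ with the explicit rotationally symmetric solution $\Phi(x)=\tfrac{1}{2}\tan(\Theta/n)|x|^2$ of $F(D^2\Phi)=\Theta$ using the comparison principle for the degenerate elliptic equation, which applies in the classical sense since Theorem~\ref{thm:reg} promotes $u$ to a smooth classical solution.

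The step I expect to be most delicate is producing the quadratic upper bound on $u$ uniformly across scales: local comparison against $\Phi$ on a single ball $B_R$ only controls $u-\Phi$ in terms of its boundary oscillation, and converting this into a uniform quadratic bound likely requires an additional ingredient, such as an Alexandrov--Bakelman--Pucci-type estimate tailored to the semi-convex class, or a monotonicity/iteration argument exploiting that $\tan(\theta)$ is precisely the sharp semi-convexity threshold highlighted by Theorem~\ref{thm:Ex}. Once the quadratic upper bound is established, the rescaling/unscaling mechanism above closes the argument.
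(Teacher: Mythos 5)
Your reduction is fine as far as it goes: the rescaling $u_R=R^{-2}u(R\,\cdot)$ preserves the equation and \eqref{SemiconvexBound}, and feeding a linear gradient bound $\|Du\|_{L^\infty(B_R)}\le C(1+R)$ into \eqref{HessianBound} does force $D^ku(0)=0$ for $k\ge 3$. But the linear gradient bound is essentially the theorem in disguise, and your route to it does not close. The comparison principle on $B_R$ only transfers the inequality $u\le \Phi+\sup_{\partial B_R}(u-\Phi)$ from the boundary inward; it gives no control whatsoever on $\sup_{\partial B_R}(u-\Phi)$, which is precisely the quadratic upper bound you need. Nor is there any pointwise upper bound on $D^2u$ to integrate up to quadratic growth: semi-convexity and the equation bound only $\lambda_2,\dots,\lambda_n$ (as in \eqref{Lambda2}), while $\lambda_1$ is a priori unbounded, so $\Delta u$ has no upper bound and $u$ could, for all this argument knows, grow super-quadratically. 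You flag this step as "delicate" and gesture at ABP or an iteration, but that missing ingredient is the entire content of the proof, so as written the argument has a genuine gap at its central step.

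The paper's proof avoids growth bounds on $u$ altogether. Since $su+c|x|^2/2$ is uniformly convex on all of $\re^n$, its gradient image is $\re^n$, so the rotation $\bar u$ of \eqref{rot} is an \emph{entire} solution of $F(D^2\bar u)=\Theta-n\phi$ with the universal two-sided Hessian bound $\tan(-\theta-\phi)I\le D^2\bar u\le \tan(\pi/2-\phi)I$ from Part 4 of Proposition \ref{prop:rot-touch}. The rotated phase is negative supercritical, so the equation is uniformly elliptic with convex level set and Evans--Krylov applies at every scale, giving $[D^2\bar u]_{C^\alpha(B_{R/2})}\le CR^{-\alpha}\to 0$; hence $D^2\bar u$ is constant, $\bar u$ is quadratic, and rotating the planar gradient graph back shows $u$ is quadratic. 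Note that any honest a priori control of $Du$ in your scheme would in practice also come from this rotation (via $\bar x=cx+sDu(x)$, $D\bar u(\bar x)=-sx+cDu(x)$ and the bounded Hessian of $\bar u$), at which point the direct Evans--Krylov argument on $\bar u$ finishes the proof and the rescaling detour through the effective estimate becomes unnecessary.
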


Ordinarily, one would establish Theorem \ref{thm:reg} by combining the classical solvablity of the Dirichlet problem with interior a priori estimates.  Classical $C^\infty$ solvability of the Dirichlet problem is only established for critical or supercritical phases \cite{CNS}.  Not only is Theorem \ref{thm:reg} about semi-convex solutions, which would not be preserved under smooth Dirichlet approximations, but the phases are subcritical.  An additional issue is that of finding a Jacobi inequality for the conditions of Theorem \ref{thm:reg}, which is needed for most of the current methods for finding interior a priori estimates, which use Trudinger's approach for the minimal surface gradient estimate.

\smallskip
To prove Theorem \ref{thm:reg}, one might hope to adapt the argument in \cite{CSY}, which works for convex viscosity solutions, to semi-convex viscosity solutions.  In that work, the Lewy-Yuan rotation $(x,y)\mapsto (x\cos\phi +y\sin\phi,-x\sin\phi+y\cos\phi)$ of the gradient graph is defined for general semi-convex functions.  The gradient graph achieves bounded slopes, allowing for the regularity of the rotated solution, which also solves \eqref{eq}.  A constant rank theorem for the Hessian then passes the regularity back to the original solution.  However, the preservation of subsolutions was only achieved for convex viscosity solutions.  In general, for equations without convex level set, the preservation of subsolutions is false.  The semi-convex viscosity solutions in \cite{MS} have non-minimal gradient graph.  Their rotations do not solve \eqref{eq} on an open set: the image of the solution's singularity.

\smallskip
The first observation behind Theorem \ref{thm:reg} is that under the phase and semi-convexity conditions assumed, we can establish the preservation of subsolutions under rotation, Proposition \ref{prop:rot}.  The idea behind these conditions is that the inequality $F(D^2\bar u)\ge\Theta$ can be satisfied inside the image of the singularity if $\Theta$ is not too positive, and the angles $\tan^{-1}\bar\lambda_i$ comprising $F$ are not too negative. Remarkably, by the examples in Theorem \ref{thm:Ex}, this argument is sharp.  An extension to solutions with only codimension two and higher singularities is outlined in Remark \ref{rem:codim}, although the subsequent regularity question appears more subtle.

\medskip
It appears difficult to establish subsolution preservation in other contexts.  Indeed, the equation for the rotation here has convex sublevel set, so the original equation \eqref{eq} is inverse convex, while Alvarez-Lions-Lasry \cite{ALL} assumed $C^2$ to show subsolution preservation for their inverse-convex equations.  It is an interesting question whether assuming $C^1$ for semi-convex viscosity solutions is enough for subsolution preservation.  This might allow one to distinguish between the examples in \cite{MS} and the examples in \cite{NV1,WdY1}.

\smallskip
The second technical aspect of Theorem \ref{thm:reg} is the constant rank part of the argument.  If the equation \eqref{eq} has convex sublevel set, then the rotated equation is inverse convex, the condition by \cite{ALL} which allows for the constant rank theorem of Caffarelli-Guan-Ma \cite{CGM}.  In \cite{CSY}, convex viscosity solutions are assumed, but level set convexity fails in the setting of Theorem \ref{thm:reg}.  The rotated equation has negative supercritical phase with convex sublevel set, so there is a constant rank theorem for the minimum eigenvalues.  Although this is insufficient, it turns out that there is eigenvalue rigidity.  Indeed, by \eqref{theta}, it is possible to rewrite \eqref{eq} as 
\eqal{
\label{altEq}
\pi/2-\tan^{-1}\lambda_1=\sum_{i=2}^n(\tan^{-1}\lambda_i+\theta).
}
At blowup points of the maximum eigenvalue $\lambda_1$, the $\tan(\theta)-$convexity forces the minimum eigenvalues to saturate their lower bound.  The constancy of the maximum eigenvalue follows from constant rank for the minimum eigenvalues and the constant coefficient equation \eqref{eq}.  The singular solutions by \cite{NV1,WdY1} fail the $\tan(\theta)$-convexity condition, except at the singularity, which shows the sharpness of this argument.  It is interesting to note that by \eqref{altEq}, if our viscosity solution is slightly more convex, $D^2u>-\tan(\theta)I$, then one can show that $u$ is $C^{1,1}$, without need for the regularity of its gradient graph.

\smallskip
This eigenvalue rigidity can be understood from the area decreasing condition \cite{WMT}, when the area integrand is convex.  Applying a formal partial rotation of \cite{WdY1}, it is possible to locally map smooth such semi-convex solutions with almost negative phase to convex solutions with phase $\Theta=\pi/2$, which are area-decreasing solutions that are convex.  The constant rank idea would also be valid for such solutions.  A different partial rotation yields the two-convexity condition in \cite{TTW1}.  It is unclear how to formulate viscosity solutions satisfying the area decreasing condition or two-convexity condition, and the partial rotation is not clear for viscosity solutions.  One can view Theorem \ref{thm:reg} as a formal, viscosity realization of regularity for a partial rotation of the area decreasing condition and the two-convexity condition.

\smallskip
Another aspect of Theorem \ref{thm:reg} is the method for deriving the (sharp) effective estimate.  Earlier works \cite{WdY2,WY2,WMT,CWY,WY3,WY4,Z1,Z2} 
derived exponential type estimates using the Trudinger \cite{T} type argument for the gradient estimate for minimal surfaces, with a pointwise argument in \cite{WY1} and compactness arguments in \cite{L3,Sh}.  That is, a Jacobi inequality is established, and the mean value and Sobolev inequalities lead to an integration by parts argument.  However, the Jacobi inequality is less clear under the conditions of Theorem \ref{thm:reg}.  Jacobi inequalities can be valid for certain semi-convex solutions \cite{WY1,L2}, but the Hessian lower bound $-\tan(\theta)$ here can be arbitrarily large.  We circumvent the Jacobi inequality by quantifying the constant rank theorem argument.  Using the weak Harnack inequality and a quantified chain of ball argument, we can get an explicit estimate in rotated space.  This idea is similar to the one employed in \cite{D}, where a lattice is used on the rotated graph.  A quantitative version of the standard constant rank theorem is given earlier in \cite{SW}.  Legendre transform connections of constant rank to regularity are established in \cite{BS}.

\smallskip
Theorem \ref{thm:Ex2} shows that our linear exponential estimate is sharp.  Our example in Section \ref{Ex2Proof} is explicit and arises from a small harmonic function with separated variables.  Applying the partial Legendre transform in the direction with convexity yields a solution of the Monge-Amp\`ere equation, or $\Theta=\pi/2$ in dimension two.  Further rotations and additions by quadratics embed this example in higher dimensions and other phases.  Therefore, linear exponential dependence is the best estimate that can be expected in these situations; see Remark \ref{rem:sharp}.  It was earlier observed in \cite{WdY2} that applying the Heinz transformation to Finn's minimal surface example with optimal exponential dependence would yield a similar solution to the Monge-Amp\`ere equation.  This solution would also have optimal dependence.  It is interesting whether the critical phase special Lagrangian equation has exponential dependence, since this rotation argument is not possible.  There is an $\exp|Du|^3$ dependence in dimension three and critical phase $\Theta=\pi/2$ \cite{WY3,WY4}.

\smallskip
Caffarelli showed a Liouville theorem for convex viscosity solutions of the Monge-Amp\`ere equation, with the smooth case by Jorgens-Calabi-Pogorelov and Cheng-Yau.  Theorem \ref{thm:liou} concerns the semi-convex viscosity solutions of a special Lagrangian equation with subcritical phase.  For convex viscosity solutions, the Liouville theorem is known by \cite{CSY,Y2}.  For supercritical phase, such a Liouville theorem follows from the regularity of viscosity solutions and the Lewy-Yuan rotation to an equation with convex level set \cite{Y3}.  Theorem \ref{thm:liou} proceeds in a similar way, once regularity is established.  Regarding the connection with the area decreasing condition, there is also a Liouville theorem for entire smooth solutions by Tsui and Wang \cite{TW}, but it is not clear whether our theorem follows from theirs, since the equivalence by partial rotation is only local.

\smallskip
In general, the Liouville theorem is false for subcritical phases and non-convex solutions.  For critical phase $\Theta=\pi/2$ in dimension three, there are exponential-type solutions by Warren \cite{W1}.  For subcritical phase $\Theta=0$, there are cubic-type solutions by Li \cite{L1}.  For general subcritical phases in dimension three, there are non-splitting entire solutions by Li \cite{L4}.  For dimension  $n\ge 4$ and subcritical phases $\Theta\in [-\pi/2,\pi/2]$, non-polynomial solutions are constructed by Tsai, Tsui, and Wang \cite{TTW2}.  

\smallskip
For smooth solutions with subcritical phases, Ogden and Yuan \cite{OY} showed a Liouville theorem for semi-convex solutions satisfying $\tan^{-1}\lambda_{min}\ge (\Theta-\pi)/n$, with no restriction on the phase.  In Theorem \ref{thm:liou}, the viscosity solutions must satisfy the stronger conditions that $\tan^{-1}\lambda_{min}\ge (\Theta-\pi/2)/(n-1)$ and $-(n-2)\pi/2<\Theta<\pi/2$.  However, by Theorem \ref{thm:Ex}, such stronger conditions are necessary for interior regularity, so it is unclear how to generalize Ogden and Yuan's theorem to entire viscosity solutions.

\smallskip
The paper is organized as follows. In Section \ref{Preliminaries}, we discuss some preliminary results concerning semi-convex functions, Legendre transforms, and rotations. In Section \ref{RegProof} we prove Theorem \ref{thm:reg}, and we derive the Liouville-type result Theorem \ref{thm:liou} as a result. In Section \ref{Ex1Proof} we construct the examples from Theorem \ref{thm:Ex}. Finally, in Section \ref{Ex2Proof} we construct the examples from Theorem \ref{thm:Ex2}.

\section*{Acknowledgements}
C. Mooney was supported by a Simons Fellowship and NSF grant DMS-2143668.  R. Shankar thanks Yu Yuan for comments and Guido De Philippis for a stimulating discussion.


\section{Preliminaries}\label{Preliminaries}

\subsection{Viscosity solutions}

Let $F(M)$ be a continuous function on the symmetric matrices that is elliptic, i.e. $F(M + N) \geq F(M)$ for all $M$ and any $N \geq 0$. We say that $u$ is a viscosity subsolution of the fully nonlinear elliptic PDE $F(D^2u)=0$, if $F(D^2P)\ge 0$ for each quadratic $P$ touching $u$ from above near a point, or $P(x_0)=u(x_0)$ with $P\ge u$ near $x_0\in\Omega$; see \cite[Proposition 2.4]{CC}.  A smooth viscosity subsolution satisfies $F(D^2u)\ge 0$ pointwise.  A supersolution satisfies the reverse inequality, and a solution is both a subsolution and a supersolution.

\smallskip
Now suppose that $F$ is locally Lipschitz and locally uniformly elliptic, i.e. for all $R > 0$, there is some $\lambda(R) > 0$ such that the eigenvalues of $(\partial F/\partial M_{ij})(M)$ are in $[\lambda,\,\lambda^{-1}]$ whenever $|M| \leq R$. Assume that $u$ is a viscosity solution to $F(D^2u) = 0$. If $F$ is also a concave function on the symmetric matrices, then the Evans-Krylov estimate, see also \cite{CC} for $C^{1,1}$ solutions, gives a $C^{2,\alpha}$ estimate for $u$ on a domain in terms of the $L^\infty$ norm of the Hessian on a larger domain. If instead $|D^2u| \le C$ and $F$ is smooth with $\{F > 0\}$ convex, then $u$ also viscosity solves a concave uniformly elliptic equation $G(D^2u)=0$. The function $G$ can be obtained by minimizing over those linear functions tangent to $F$ on $\{F = 0\} \cap \{|M| \leq C\}$.  In the supercritical phase case for \eqref{eq}, $G$ is given in \cite{CPW} as $G(D^2u)=-\exp(-AF(D^2u))$ for large $A$.

\medskip
If $u$ is a smooth solution of $F(D^2u)=0$, a smooth elliptic PDE with $\{F > 0\}$ convex, then the double derivatives $u_{ee}$ where $e\in S^{n-1}$, are subsolutions of the linearized operator,
$$
\frac{\pd F}{\pd u_{ij}}(D^2u)\frac{\pd^2}{\pd x^i\pd x^j}(u_{ee})=-\frac{\pd^2 F}{\pd u_{ij}\pd u_{k\ell}}(D^2u)u_{eij}u_{ek\ell}\ge 0.
$$
Here, implied summation is used.  The maximum of subsolutions of  is a viscosity subsolution \cite{CC}, so by the variational principle, the maximum eigenvalue $\lambda_{1}$ of the Hessian $D^2u$ is a viscosity subsolution.  More generally, the convex combinations $\lambda_1+\lambda_2+\cdots+\lambda_k$ of the largest eigenvalues of the Hessian are also viscosity subsolutions.

\medskip
In the case of the special Lagrangian equation
$$
F(D^2u)=\sum_{i=1}^n\tan^{-1}\lambda_i=\Theta=const.,
$$
the linearized operator is equivalent to the Laplace-Beltrami operator $\Delta_g$ with induced metric $g=dx^2+dy^2$ on gradient graph $(x,y)=(x,Du(x))$:
$$
\frac{\pd F}{\pd u_{ij}}(D^2u)\frac{\pd^2}{\pd x^i\pd x^j}=\Delta_g=\frac{1}{\sqrt{\det g}}\partial_i(\sqrt{\det g}g^{ij}\partial_j).
$$

\subsection{Lipschitz semi-convex functions}
We recall the notion of a gradient graph for Lipschitz functions.  Let $u$ be a $C^{0,1}$ convex function on open set $\Omega\subset\re^n$.  If a tangent plane with slope $p\in\re^n$ touches $u$ at $x_0\in\Omega$ from below, then $p$ is called a subgradient of $u$, denoted $p\in\pd u(x_0)$, with $\pd u(x_0)$ the subdifferential, or collection of subgradients.  A subgradient $p$ of a \textbf{strictly convex function} touches it at a unique point $x_0$.  Each subdifferential is a closed, convex set.  Moreover, the \textbf{gradient image} $\pd u(U)$ of an open set $U$ is open, if $u$ is strictly convex.  The gradient images of disjoint compact sets are separated, if $u$ is strictly convex.  If it is $-L$-convex for some $L>0$, or $D^2u\ge LI$ for $u\in C^{2}$, then the gradient map $x\mapsto \pd u(x)$ is \textbf{distance-increasing}, with
\eqal{
|\bar x_1-\bar x_2|^2\ge L|x_1-x_2|^2,\qquad \bar x_i\in \pd u(x_i).
}
For $K\in\re$, we write $D^2u\ge KI$ if $u-K|x|^2/2$ is convex, and similarly for $D^2u\le KI$.  We say that $D^2u>KI$ if there exists $\ep>0$ such that $D^2u\ge (K+\ep)I$, and similarly with $D^2u<KI$, and $D^2u<\infty$ if $D^2u\le KI$ for some $K$.  We recall that $u$ is $C^{1,1}$ if $-KI\le D^2u\le CI$ \cite[Proposition 1.2]{CC}.  

\smallskip
The gradient image is the vertical projection of the \textbf{``gradient graph"}, the subset $\{(x,\pd u(x)),x\in\Omega\}$ of $\Omega\times\re^n$ that functions like a multi-valued graph of the gradient of $u$ over $\Omega$.  For example, if $u=|x_1|$ on $\re$, then $\pd u(x)=\{\pm 1\}$ for $\pm x>0$, while $\pd u(0)=[-1,1]$, so the gradient graph has a vertical step.

\medskip
The gradient graph is also defined for semi-convex functions.  
If $D^2u\ge -KI$, then we say that $p\in\re^n$ is a subgradient of $u$ at $x_0$ if $p+Kx_0$ is a subgradient of $u+K|x|^2/2$ at $x_0$.  The collection of such subgradients is the subdifferential of $u$, which is just a shift: $\pd u(x_0)=\pd (u+K|x|^2/2)(x_0)-Kx_0$.  In a similar way, we define the gradient image $\pd u(\Omega)$ and gradient graph of a semi-convex function $u$ on $\Omega$.

\subsection{Touching reversal under Legendre transform}

This section fleshes out some details indicated in \cite[Proposition 3.2]{CSY}.  The main technical results from this section verify that the touching of two functions is reversed under the Legendre transform, or the inverse Legendre transform.  

\medskip
For convex $f(x)$ on an open set $\Omega$, its Legendre transform over $\Omega$ is a convex function defined by
\eqal{
\label{Leg}
f^*(\bar x)=\sup_{x\in\Omega}(\bar x\cdot x-f(x)),\qquad \bar x\in \pd f(\Omega).
}
The Legendre transform of a strongly convex quadratic $Q(x)=\frac{1}{2}\langle x,Mx\rangle$ on $\re^n$ is $\frac{1}{2}\langle \bar x,M^{-1}\bar x\rangle$.  The transform of $|x|$ on $\re^n$ is $0$ on $\overline{B_1(0)}$ and $\infty$ outside, and the transform of
the latter function is $|x|$.

\smallskip
The Legendre transform restricts well: the transform over a smaller domain $\Omega'\subset\Omega$ is equal to $f^*$ on $\pd f(\Omega')$.  On the other hand, if $f$ is continuous on $\overline\Omega$ and $\Omega$ is convex, one can extend $f$ to $+\infty$ outside $\overline\Omega$ to make it convex, and its Legendre transform extends from $\pd f(\Omega)$ to $\re^n$, with $\pd f(\re^n)=\pd f(\overline{\Omega})=\re^n$.  What happens is that the remaining tangent planes touch the graph over the boundary $\pd\Omega$.  

\smallskip
This extension to $\re^n$ is closed (equal to the greatest lower semi-continuous function $h\le f$), in the terminology of \cite[pg 52]{R}. A closed convex function $f$ is also equal to the supremum of the planes $p\le f$ \cite[Theorem 12.1, pg 102]{R}.

\smallskip
If $f$ is a closed convex function, then the Legendre transform is involutive, $f^{**}=f$ \cite[Theorem 12.2, pg 104]{R}.  To satisfy the closed condition, we will use the extension outlined above.  It is also useful to assume that $f$ is strictly convex; this is valid in the applications to rotation.

\smallskip
The transform is also \textbf{order reversing}: if $g\le f$ on $U$, then $g^*\ge f^*$ on $\pd f(U)\cap \pd g(U)$.  If $g$ touches $f$ from below at $x_0 \in U$, and $\pd f(U) \cap \pd g(U)$ contains an open neighborhood of a point $p \in \pd g(x_0)$, then $g^*$ touches $f^*$ from above at $p$. 
We use this observation in the following lemma.  

\begin{lem}[Touching reversal of transforms]
\label{lem:Leg}
Let $Q$ be a uniformly convex quadratic polynomial, and $g$ be a strictly convex function on $B_1$.  

\smallskip
\textbf{Part 1:} Assume that $Q$ touches $g$ from above (below) near $x_0 \in B_1$. Then $Q^*$ touches $g^*$ from below (above) near $DQ(x_0)$.

\smallskip
\textbf{Part 2:} Assume that $Q^*$ touches $g^*$ from below near $\bar x_0 \in \partial g(x_0),\, x_0 \in B_1$. Then $Q$ touches $g$ from above near $x_0$.

\smallskip
\textbf{Part 3:} Assume that $Q^*$ touches $g^*$ from above near $\bar x_0 \in \partial g(x_0),\, x_0 \in B_1$. Then $Q$ touches $g$ from below near $x_0$.

\smallskip
\textbf{Part 4:} If $D^2g\ge KI>0$, then $D^2g^*\le K^{-1}I$.  If $D^2g^*\ge KI$, then $D^2g\le K^{-1}I$.

\smallskip
\textbf{Part 5:} If $D^2g>0$, and $g$ is twice differentiable at $x_0$, then $g^*$ is twice differentiable at $\bar x_0=Dg(x_0)$, with 
$$
D^2g^*(\bar x_0)=D^2g(x_0)^{-1}.
$$
If $D^2g^*>0$, and $g^*$ is twice differentiable at $\bar x_0$, then $g$ is twice differentiable at $x_0$, with the above formula.
\end{lem}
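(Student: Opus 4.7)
The plan is to establish the five parts in order, using the definition of $f^*$, the order-reversing property, and the involution $g^{**}=g$; throughout the argument exploits that strict convexity of $g$ makes its subgradient map $\partial g$ injective on $B_1$, with open gradient image $\partial g(U)$ on open $U\subset B_1$ (as noted in the preliminaries). For Part 1, ``from above'' case, I first observe that a smooth $Q$ touching convex $g$ from above at $x_0$ forces $g$ to be differentiable at $x_0$ with $Dg(x_0)=DQ(x_0)=:\bar x_0$: bracketing the one-sided directional derivatives of $g$ by those of $Q$ and combining with $g'_+(x_0,v)+g'_+(x_0,-v)\geq 0$ collapses these into a single linear function. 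Then $\bar x_0\in\partial g(x_0)$ and $g^*(\bar x_0)=\bar x_0\cdot x_0-g(x_0)=Q^*(\bar x_0)$; for $\bar x$ near $\bar x_0$ the sup in $Q^*$ is attained at $x(\bar x):=(DQ)^{-1}(\bar x)\in U$ by continuity, and $Q(x(\bar x))\geq g(x(\bar x))$ gives $Q^*(\bar x)\leq g^*(\bar x)$. The ``from below'' case is similar: $\bar x_0=DQ(x_0)\in\partial g(x_0)$ by tangential comparison with $g$, and strict convexity of $g$ lets one pick $x\in U$ with $\bar x\in\partial g(x)$ for $\bar x$ near $\bar x_0$, giving $g^*(\bar x)=\bar x\cdot x-g(x)\leq\bar x\cdot x-Q(x)\leq Q^*(\bar x)$.

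For Parts 2 and 3 the plan is to run the dual computations. Part 3 is direct: given $Q^*\geq g^*$ near $\bar x_0$, for $x$ near $x_0$ one has $DQ(x)$ near $\bar x_0$, so $Q(x)=x\cdot DQ(x)-Q^*(DQ(x))\leq x\cdot DQ(x)-g^*(DQ(x))\leq g(x)$, with equality at $x_0$ since $\bar x_0\in\partial g(x_0)$. For Part 2, the analogous chain $g(x)=x\cdot\bar x'(x)-g^*(\bar x'(x))\leq x\cdot\bar x'(x)-Q^*(\bar x'(x))\leq Q(x)$ requires the sup-attaining $\bar x'(x)\in\partial g(x)$ to lie in the neighborhood $V$ where $Q^*\leq g^*$. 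For this I first observe that $g^*$ cannot have an affine segment through $\bar x_0$ --- otherwise the uniformly convex quadratic $Q^*$ could not touch from below at $\bar x_0$ --- which by Fenchel duality forces $\partial g(x_0)=\{\bar x_0\}$; upper hemicontinuity of $\partial g$ for closed convex $g$ then delivers the desired selection.

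For Part 4, from $D^2g\geq KI$ and $\bar x_0\in\partial g(x_0)$, substituting $g(x)\geq g(x_0)+\bar x_0\cdot(x-x_0)+\tfrac{K}{2}|x-x_0|^2$ into $\sup_x(\bar x\cdot x-g(x))$ and completing the square yields $g^*(\bar x)\leq g^*(\bar x_0)+x_0\cdot(\bar x-\bar x_0)+\tfrac{1}{2K}|\bar x-\bar x_0|^2$ at every such $\bar x_0$; these pointwise tangential upper bounds characterize $g^*-\tfrac{1}{2K}|\cdot|^2$ as concave, i.e.\ $D^2g^*\leq K^{-1}I$. The converse uses involutivity. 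For Part 5, I sandwich $g$ near $x_0$ between $P(x)\pm\tfrac{\epsilon}{2}|x-x_0|^2$ with $P$ the second-order Taylor polynomial of $g$ at $x_0$; Part 1 then produces quadratic touchings of $g^*$ at $\bar x_0$ from below and above with Hessians $(A\pm\epsilon I)^{-1}$ and shared value $g^*(\bar x_0)$ and gradient $x_0$ at $\bar x_0$. Letting $\epsilon\to 0$ identifies $D^2g^*(\bar x_0)=A^{-1}$, and the reverse direction uses Parts 2 and 3.

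The main obstacle is Part 2, where one must bootstrap from hypotheses on $g^*$ to differentiability of $g$ at $x_0$ via the rigidity of a uniformly convex quadratic touching a convex function from below. This differentiability is what keeps the subgradient selections at nearby points inside the neighborhood on which $Q^*\leq g^*$ is known, and without it the chain of inequalities that is supposed to deliver $Q\geq g$ near $x_0$ breaks down.
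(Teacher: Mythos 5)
Your Parts 1, 4 and 5 follow essentially the paper's strategy (Part 1 is the order-reversing computation spelled out; Part 4 is the same semi-concavity argument done by hand; Part 5 is the identical $\pm\ep$ sandwich). For Parts 2 and 3 you take a genuinely different route. Your Part 3 replaces the paper's sublevel-set argument (trapping $\{g^*<h\}$ inside $B_r(\bar x_0)$ via the uniform convexity of $Q^*$ and then applying order reversal to $g^{**}=g$) with a direct Fenchel--Young chain $Q(x)=x\cdot DQ(x)-Q^*(DQ(x))\le x\cdot DQ(x)-g^*(DQ(x))\le g(x)$; this is cleaner and avoids the extension of $g$ needed to make $g^{**}=g$, since only the inequality $g(x)\ge x\cdot\bar y-g^*(\bar y)$ is used. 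Your Part 2 replaces the paper's argument that $Dg^*(B_r(\bar x_0))$ contains a neighborhood of $x_0$ with the dual statement that $\partial g(x_0)=\{\bar x_0\}$ plus upper hemicontinuity of $\partial g$; both hinge on the uniform convexity of $Q^*$ in the same essential way, and both are valid.

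One imprecision in Part 2 should be repaired. You rule out an affine segment of $g^*$ \emph{through} $\bar x_0$, but if $\partial g(x_0)\ne\{\bar x_0\}$ and $\bar x_0$ is an extreme point of the convex set $\partial g(x_0)$, the affine segment of $g^*$ only \emph{emanates} from $\bar x_0$, and a uniformly convex function can lie below an affine function on a segment with equality at an endpoint (e.g.\ $t^2-t\le 0$ on $[0,1]$). To close this case you need $DQ^*(\bar x_0)=x_0$: since $g$ is strictly convex, the subgradient $\bar x_0$ is attained only at $x_0$, so $\partial g^*(\bar x_0)=\{x_0\}$ and $g^*$ is differentiable at $\bar x_0$ with $Dg^*(\bar x_0)=x_0$; touching from below at a point of differentiability forces $DQ^*(\bar x_0)=x_0$, and then uniform convexity gives $Q^*(\bar x_0+tv)\ge g^*(\bar x_0)+t\,x_0\cdot v+ct^2>g^*(\bar x_0+tv)$ along any segment on which $g^*$ is affine, a contradiction. (This is exactly the point where the paper invokes $Dg^*(\bar x_0)=DQ^*(\bar x_0)=x_0$ from Rockafellar.) With that fix your selection argument via upper hemicontinuity goes through and the chain $g(x)=x\cdot\bar x'(x)-g^*(\bar x'(x))\le x\cdot\bar x'(x)-Q^*(\bar x'(x))\le Q(x)$ is correct.
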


\begin{proof}
\textbf{Part 1:} First, $D Q(x_0) \in \partial g(x_0)$, with equality when $Q$ touches from above. We have that $Q$ touches $g$ above or below in $B_r(x_0)$ for some $r > 0$. By strict convexity, $\partial g(B_r(x_0))$ is open. Order reversing completes the proof.

\smallskip
\textbf{Part 2:} The gradient of $Q^*$ acts like an open map.  Without loss of generality we may extend $g$ to $+\infty$ outside $\overline{B_R}$, with $|x_0| < R < 1$, so that $g^{**} = g$. By strict convexity, touching still happens near $\bar x_0$, say in $B_r(\bar x_0)$ for some small $r > 0$, where $g^*$ is $C^1$ and $D g^*(\bar x_0) = D Q^*(\bar x_0) = x_0$ \cite[Theorem 26.3, pg 253]{R}. By order reversing, we just need to show that $D g^*(B_r(\bar x_0))$ contains a neighborhood of $x_0$. This follows from touching below and the uniform convexity of $Q^*$: for $h > 0$ small, 
$$\{g^*(\bar x) < g^*(\bar x_0) + x_0 \cdot (\bar x - \bar x_0) + h\} \subset \{Q^*(\bar x) < g^*(\bar x_0) + x_0 \cdot (\bar x - \bar x_0) + h\} \subset B_r(\bar x_0).$$
Since $D g^*(\{g^*(\bar x) < g^*(\bar x_0) + x_0 \cdot (\bar x - \bar x_0) + h\})$ contains a neighborhood of $x_0$, we are done.

\smallskip
\textbf{Part 3:} Now the gradient may not be an open map.  Make the same extension as above. Subtracting a linear function and translating, we may assume that $x_0 = \bar x_0 = 0$ and that $g(0) = 0$. For some $r > 0$ we have that the uniformly convex $Q^*$ touches $g^*$ from above at $0$ in $B_r$. Moreover, $\partial g(0)= \{g^* = 0\} $ is compact, hence $\{g^* < h\}$ is bounded for $h > 0$. We claim that for $h > 0$ small, $Q^* \geq g^*$ in $\{g^* < h\}$. Indeed, this is automatic in $B_r$, while outside $B_r$, we have in this set that $g^* < h < cr^2 < Q^*$ for $h$ sufficiently small. Since $\partial g^*(\{g^* < h\})$ contains a neighborhood of zero, order reversing completes the proof.

\smallskip
\textbf{Part 4:} We use Part 1.  If $g-K|x|^2/2$ is convex, then for any subgradient $p\in \pd g(x_0)$, there is a quadratic $Q$ with $D^2Q=KI$ and $DQ(x_0)=p$ which touches $g$ from below.  This means such a $Q^*$ touches $g^*$ from above at all such subgradients, hence $D^2g^*\le K^{-1}I$ on all of $\pd g(B_1)$ \cite[Proposition 1.6]{CC}.  The second case follows from Part 2.

\smallskip
\textbf{Part 5:} If $g$ is twice differentiable at $x_0$ and $D^2g\ge KI>0$, then uniformly convex quadratics $Q$ with $D^2Q=D^2g(x_0)\pm \ep I$ touch $g$ from above and below, so Part 1 of Lemma \ref{lem:Leg} shows that quadratics $Q^*$ with $D^2Q^*=(D^2g(x_0)\pm \ep I)^{-1}$ touch $g^*$ from below and above, so $g^*$ is twice differentiable at $Dg(x_0)$, with $D^2g^*(Dg(x_0))=D^2g(x_0)^{-1}$.  By Parts 2 and 3, the same argument works if $g$ is strictly convex, and $g^*$ is twice differentiable and strongly convex.
\end{proof}

\subsection{CSY rotation and touching preservation}
The gradient graph $(x,\pd u(x))$ is continuous in the standard metric on $\re^n\times\re^n$ by \cite[Corollary 24.5.1]{R}.  In the case of a convex potential, one sees it is a Lipschitz submanifold of $\re^n\times\re^n$ by the downward $\pi/4$ rotation $(x,y)\mapsto (x+y,-x+y)/\sqrt 2$ (rotation upwards of the axes), after which it is a Lipschitz graph $(\bar x,\bar F(\bar x))$; \cite[Proposition 1.1]{AA}.  

\smallskip
Since this rotation preserves symplectic form $dx\wedge dy$, the graph should still be Lagrangian, and expressible as a gradient graph $(\bar x,D\bar u(\bar x))$.  In the smooth case, the potential $\bar u$ is obtained as the Legendre-Lewy-Yuan transform \cite{Y2}.  In the $C^1$ potential case, an explicit expression for $\bar u(\bar x)$ is given by Warren \cite{W2} and Chen-Warren \cite{CW}.  For Lipschitz potentials $u(x)$, Chen-Shankar-Yuan \cite{CSY} expressed $\bar u$ in terms of a conjugation of the Legendre transform by scalings and adding quadratics:
$$
\bar u(\bar x)=\frac{1}{2}|\bar x|^2-\sqrt 2\left[\frac{1}{\sqrt 2}\left(u(x)+\frac{1}{2}|x|^2\right)\right]^*(\bar x),\qquad \bar x\in \pd \left(\frac{u+|x|^2/2}{\sqrt 2}\right)(\Omega),
$$
where $f^*(\bar x)$ is the Legendre transform.  

\smallskip
We now recall the \cite{CSY} rotation for general $C^{0,1}$ semi-convex functions.  Let us assume $D^2u\ge-\tan(\theta)I$ for some $\theta\in (0,\pi/2)$, or $\pi/2\ge \tan^{-1}\lambda_i \ge -\theta$.  We can rotate down by an angle $0<\phi<\pi/2-\theta$.  Let $s=\sin\phi$ and $c=\cos\phi$.  Then the rotation $\bar x+i\bar y=e^{-i\phi}(x+i\pd u(x))$ of the gradient graph, or in terms of subgradients $y\in \pd u(x)$,
\es{
\bar x&=cx+sy,\\
\bar y&=-sx+cy,
}
can be realized as a Lipschitz, gradient graph $(\bar x,D\bar u(\bar x))$, with potential involving the Legendre transform:
\eqal{
\label{rot}
\bar u(\bar x)=\frac{c}{2s}|\bar x|^2-\frac{1}{s}\left(su+c\frac{|x|^2}{2}\right)^*(\bar x),\qquad \bar x\in \pd \left(su+c|x|^2/2\right)(x).
}
Since $\cot\phi>\tan\theta$, it follows that $su+c|x|^2/2$ is uniformly convex, so touching reversal Lemma \ref{lem:Leg} is valid for the Legendre transform part.  In terms of the rotation $\bar u$, there is \textbf{touching preservation}.  Namely, we apply the Legendre transform touching reversal Lemma \ref{lem:Leg} to
$$
U(x):=su(x)+c|x|^2/2,\qquad U^*(\bar x)=-s\bar u(\bar x)+c|\bar x|^2/2,
$$
and the associated quadratics derived from $Q$ and $\bar Q$.  We record the corresponding results below.

\begin{prop}[Touching preservation of rotation]
\label{prop:rot-touch}
    On a domain $B_1\subset\re^n$, let $D^2u,D^2Q>-\tan(\pi/2-\phi)I$ for some $\phi\in(0,\pi/2)$, with $Q$ a quadratic polynomial.  Let $\bar u(\bar x),\bar Q(\bar x)$ be the $\phi$-rotations \eqref{rot}.  Let $x_0\in B_1$ and $\bar x_0\in \pd (su+c|x|^2/2)(x_0)$.
    
\smallskip
    \textbf{Part 1:} Assume that $Q$ touches $u$ from below (above) near $x_0$.  Then $\bar Q$ touches $\bar u$ from below (above) near $D(sQ+c|x|^2/2)(x_0)$.

\smallskip
    \textbf{Part 2:} Assume that $\bar Q$ touches $\bar u$ from above near $\bar x_0$.  Then $Q$ touches $u$ from above near $x_0$.

\smallskip
    \textbf{Part 3:} Assume that $\bar Q$ touches $\bar u$ from below near $\bar x_0$.  Then $Q$ touches $u$ from below near $x_0$.
    
\smallskip
    \textbf{Part 4:} The rotation $\bar u$ is $C^{1,1}$: if $D^2u\ge\tan(-\theta)I$ for some $\theta\in(0,\pi/2-\phi)$, then
    $
    \tan(-\theta-\phi)I\le D^2\bar u\le \tan(\pi/2-\phi)I.
    $
    If $D^2\bar u<\tan(\pi/2-\phi)$, then $D^2u<\infty$.

\smallskip
    \textbf{Part 5:} If $u$ is twice differentiable at $x_0$, then $\bar u$ is twice differentiable at $\bar x_0$.  If $\bar u$ is twice differentiable at $\bar x_0$ and $D^2\bar u<\tan(\pi/2-\phi)$, then $u$ is twice differentiable at $x_0$.  In each case, there is the formula
$$
D^2\bar u(\bar x_0)=(-sI+cD^2u)(cI+sD^2u)^{-1}(x_0).
$$
\end{prop}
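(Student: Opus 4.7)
The plan is to reduce each part of the proposition to the corresponding part of Lemma \ref{lem:Leg} via the conjugation
\begin{equation*}
U(x) = s u(x) + \tfrac{c}{2}|x|^2, \qquad Q'(x) = s Q(x) + \tfrac{c}{2}|x|^2,
\end{equation*}
together with the rearrangement of \eqref{rot}, which gives the companion identities $U^*(\bar x) = -s\bar u(\bar x) + \tfrac{c}{2}|\bar x|^2$ and $(Q')^*(\bar x) = -s\bar Q(\bar x) + \tfrac{c}{2}|\bar x|^2$. The hypotheses $D^2u,\, D^2Q > -\tan(\pi/2 - \phi)I = -\cot\phi \cdot I$ translate to uniform convexity $D^2U,\, D^2Q' \geq \varepsilon I$ for some $\varepsilon > 0$, so Lemma \ref{lem:Leg} applies; moreover $\bar x_0 \in \pd U(x_0)$ matches the base point hypotheses there.

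For Parts 1--3, the map $f \mapsto s f + \tfrac{c}{2}|x|^2$ preserves order since $s > 0$, so $Q$ and $u$ touch at $x_0$ in a given direction iff $Q'$ and $U$ touch at $x_0$ in the same direction. Meanwhile the map $g \mapsto -s g + \tfrac{c}{2}|\bar x|^2$ reverses order since $-s < 0$, so $\bar Q$ and $\bar u$ touch at $\bar x_0$ iff $(Q')^*$ and $U^*$ touch at $\bar x_0$ in the opposite direction. Feeding these two equivalences into the three direction-reversing parts of Lemma \ref{lem:Leg} produces the three direction-preserving parts of the proposition, as the two sign flips cancel.

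For Part 4, the hypothesis $D^2u \geq -\tan(\theta) I$ gives $D^2U \geq (c - s\tan\theta)I = \tfrac{\cos(\theta+\phi)}{\cos\theta}\, I > 0$. Lemma \ref{lem:Leg} Part 4 then yields $D^2 U^* \leq \tfrac{\cos\theta}{\cos(\theta+\phi)}\, I$, and translating back via $U^* = -s\bar u + \tfrac{c}{2}|\bar x|^2$, together with the identity $c - \tfrac{\cos\theta}{\cos(\theta+\phi)} = -s \tan(\theta+\phi)$ (which follows from $\cos\phi\cos(\theta+\phi) - \cos\theta = -\sin\phi\sin(\theta+\phi)$), gives $D^2\bar u \geq \tan(-\theta - \phi)I$. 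The upper bound $D^2\bar u \leq \cot\phi \cdot I$ comes from convexity of $U^*$ (i.e.\ $D^2U^* \geq 0$). The partial converse is symmetric: $D^2\bar u \leq (\cot\phi - \varepsilon)I$ implies $D^2U^* \geq s\varepsilon I > 0$, hence $D^2U \leq (s\varepsilon)^{-1}I$ by Lemma \ref{lem:Leg} Part 4, so $D^2 u$ is bounded above.

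For Part 5, if $u$ is twice differentiable at $x_0$ then so is $U$, and the strict convexity $D^2U \geq \varepsilon I$ with Lemma \ref{lem:Leg} Part 5 give twice differentiability of $U^*$ at $\bar x_0 = DU(x_0)$ with $D^2U^*(\bar x_0) = D^2U(x_0)^{-1}$; hence $\bar u$ is twice differentiable at $\bar x_0$, and the algebraic identity
\begin{equation*}
\tfrac{1}{s}\bigl(c I - (c I + s D^2u(x_0))^{-1}\bigr) = (-s I + c D^2u(x_0))(c I + s D^2u(x_0))^{-1}
\end{equation*}
yields the stated formula. The converse uses $D^2\bar u < \cot\phi \cdot I$ to secure $D^2U^* \geq s\varepsilon I > 0$, bringing in the second half of Lemma \ref{lem:Leg} Part 5. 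The main technical obstacle is the bookkeeping of the two sign flips in Parts 1--3 (confirming they truly cancel into preservation rather than reversal) and the trigonometric identity in Part 4; everything else is a direct transfer from Lemma \ref{lem:Leg}.
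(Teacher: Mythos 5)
Your proposal is correct and is exactly the paper's intended argument: the paper proves this proposition precisely by applying Lemma \ref{lem:Leg} to $U=su+c|x|^2/2$ with $U^*=-s\bar u+c|\bar x|^2/2$ and the corresponding quadratics, and your write-up simply makes explicit the two cancelling sign flips, the trigonometric identity in Part 4, and the matrix identity in Part 5, all of which check out.
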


If $\lambda_i$ and $\bar\lambda_i$ are eigenvalues of $D^2u(x)$ and $D^2\bar u(\bar x)$ respectively, then 
\eqal{
\label{angles}
\tan^{-1}\bar\lambda_i=\tan^{-1}\lambda_i-\phi.
}
In the smooth case, this is the well known Legendre-Lewy-Yuan rotation by general angle \cite{WY2}.  For $C^1$ potentials $u(x)$, the potential is given by Warren \cite{W2} and Chen-Warren \cite{CW}.


\subsection{A bound on the non-maximal Hessian eigenvalues}
To conclude this section we record a simple bound on the Hessian eigenvalues of a function satisfying the conditions of Theorem \ref{thm:reg}:

\begin{lem}
If $u$ is a smooth function satisfying the hypotheses of Theorem \ref{thm:reg} with Hessian eigenvalues $\lambda_1 \geq \lambda_2 \geq ... \geq \lambda_n$, then
\begin{equation}\label{Lambda2}
-\tan(\theta) \leq \lambda_i < 1 \text{ for all } i \geq 2.
\end{equation}
\end{lem}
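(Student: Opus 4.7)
The lower bound $\lambda_i \geq -\tan(\theta)$ for all $i$ is immediate from the semi-convexity hypothesis $D^2u \geq -\tan(\theta)I$, so nothing to do there. The substance is the upper bound $\lambda_i < 1$ for $i \geq 2$, equivalently $\tan^{-1}\lambda_i < \pi/4$. Since the eigenvalues are ordered, it suffices to prove this for $i = 2$.

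The plan is to use the equation $\sum_{j=1}^n \tan^{-1}\lambda_j = \Theta$ together with the pointwise bound $\tan^{-1}\lambda_j \geq -\theta$ (coming from $\lambda_j \geq -\tan\theta$) to isolate $\tan^{-1}\lambda_1 + \tan^{-1}\lambda_2$. Discarding the smallest $n-2$ angles via the lower bound $-\theta$ each, one obtains
\eqal{
\tan^{-1}\lambda_1 + \tan^{-1}\lambda_2 \leq \Theta + (n-2)\theta.
}
Now invoke the definition $\theta = (\pi/2 - \Theta)/(n-1)$, which rewrites as $(n-1)\theta + \Theta = \pi/2$, hence $\Theta + (n-2)\theta = \pi/2 - \theta$. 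This gives the clean inequality $\tan^{-1}\lambda_1 + \tan^{-1}\lambda_2 \leq \pi/2 - \theta$.

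Finally, because $\lambda_1 \geq \lambda_2$ (ordering), we have $\tan^{-1}\lambda_1 \geq \tan^{-1}\lambda_2$, and so
\eqal{
2\tan^{-1}\lambda_2 \leq \pi/2 - \theta, \qquad \text{i.e.,} \qquad \tan^{-1}\lambda_2 \leq \pi/4 - \theta/2.
}
Since $\theta \in (0,\pi/2)$, this strictly beats $\pi/4$, giving $\lambda_2 \leq \tan(\pi/4 - \theta/2) < 1$, and hence $\lambda_i \leq \lambda_2 < 1$ for every $i \geq 2$. There is no real obstacle here; the only point to be careful about is using the hypothesis $\Theta \in (-(n-2)\pi/2, \pi/2)$ to ensure $\theta \in (0, \pi/2)$ as asserted in \eqref{theta}, so that $-\theta$ is a legitimate lower bound for each $\tan^{-1}\lambda_j$ and the final strict inequality $\pi/4 - \theta/2 < \pi/4$ is nontrivial.
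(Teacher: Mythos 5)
Your proof is correct and follows essentially the same route as the paper: use the equation to write $\tan^{-1}\lambda_1+\tan^{-1}\lambda_2=\Theta-\sum_{i>2}\tan^{-1}\lambda_i$, bound the discarded angles below by $-\theta$ via semi-convexity, and use the ordering to get $2\tan^{-1}\lambda_2\le \Theta+(n-2)\theta=\pi/2-\theta<\pi/2$. The only cosmetic difference is that the paper writes the right-hand side as $\frac{1}{n-1}\Theta+\frac{n-2}{n-1}\frac{\pi}{2}$ and invokes $\Theta<\pi/2$, while you simplify it to $\pi/2-\theta$ and invoke $\theta>0$; these are the same bound.
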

\begin{proof}
We have
\begin{align*}
2\tan^{-1}(\lambda_2) &\leq \tan^{-1}(\lambda_1) + \tan^{-1}(\lambda_2) \\
&= \Theta - \sum_{i >2} \tan^{-1}(\lambda_i) \\
&\leq \frac{1}{n-1}\Theta + \frac{n-2}{n-1}\frac{\pi}{2} \\
&< \frac{\pi}{2},
\end{align*}
where in the second inequality we used (\ref{theta}) and (\ref{SemiconvexBound}).
\end{proof}


\section{Proof of Theorem \ref{thm:reg} (Regularity) and Theorem \ref{thm:liou} (Liouville)}\label{RegProof}

\subsection{Preservation of semi-convex solutions under rotation}

If $u$ is such a semi-convex viscosity solution of $F(D^2u)=\Theta$ in $\Omega$, then in \cite[Proposition 3.2]{CSY}, it is shown that $\bar u$ in \eqref{rot} is a viscosity \textbf{supersolution} of 
\eqal{
F(D^2\bar u)=\Theta-n\phi,\qquad x\in \pd(su+c|x|^2/2)(\Omega).
}
For completeness, we recall this proof using the touching preservation detailed in Proposition \ref{prop:rot-touch}.

\begin{prop}[Supersolution preservation, \cite{CSY}]
    Let $u$ be a viscosity supersolution of $F(D^2u)=\Theta$ on domain $\Omega\subset\re^n$ with $D^2u\ge-\tan(\theta)I$, $-(n-2)\pi/2<\Theta<\pi/2$, and $\theta=(\pi/2-\Theta)/(n-1)\in(0,\pi/2)$.  For any $0<\phi<\pi/2-\theta$, the $\phi$-rotation \eqref{rot} is a viscosity supersolution of $F(D^2\bar u)=\Theta-n\phi$ on the domain $\pd (su+c|x|^2/2)(\Omega)$.
\end{prop}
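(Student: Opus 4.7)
The plan is to verify the viscosity supersolution inequality for $\bar u$ at the equation $F(D^2\bar u)=\Theta-n\phi$ directly from the definition. Fix $\bar x_0$ in the rotated domain and choose $x_0\in\Omega$ with $\bar x_0\in\pd(su+c|x|^2/2)(x_0)$. Let $\bar Q$ be a quadratic polynomial touching $\bar u$ from below near $\bar x_0$; the goal is to show $F(D^2\bar Q(\bar x_0))\le\Theta-n\phi$. My strategy is to transfer $\bar Q$ back through the inverse of the rotation~\eqref{rot} to a quadratic $Q$ touching $u$ from below near $x_0$, invoke the viscosity supersolution property of $u$, and then recover the phase shift through the pointwise eigenvalue identity~\eqref{angles}.

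Part~4 of Proposition~\ref{prop:rot-touch} gives that $\bar u$ is $C^{1,1}$ with $D^2\bar u\le\tan(\pi/2-\phi)I=\cot(\phi)I$. A standard Taylor-expansion argument from semi-concavity then forces $D^2\bar Q(\bar x_0)\le \cot(\phi)I$. To apply Part~3 of Proposition~\ref{prop:rot-touch} one needs the strict inequality $D^2\bar Q(\bar x_0)<\cot(\phi)I$, since otherwise the inverse rotation (a M\"obius map on eigenvalues, $\bar\lambda\mapsto(s+c\bar\lambda)/(c-s\bar\lambda)$) sends an eigenvalue to $+\infty$ and $\bar Q$ is not the rotation of any genuine quadratic. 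I arrange this by the perturbation $\bar Q_\delta(\bar x):=\bar Q(\bar x)-\tfrac{\delta}{2}|\bar x-\bar x_0|^2$ for $\delta>0$ small: $\bar Q_\delta$ still touches $\bar u$ from below at $\bar x_0$, and satisfies $D^2\bar Q_\delta=D^2\bar Q-\delta I<\cot(\phi)I$ strictly.

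Inverting the rotation produces a unique quadratic $Q_\delta$ with $D^2Q_\delta>-\cot(\phi)I$ whose $\phi$-rotation~\eqref{rot} equals $\bar Q_\delta$. Part~3 of Proposition~\ref{prop:rot-touch} then yields that $Q_\delta$ touches $u$ from below near $x_0$. The viscosity supersolution property of $u$ gives $F(D^2Q_\delta)\le\Theta$, and the pointwise eigenvalue identity~\eqref{angles}, $\arctan(\bar\lambda_i)=\arctan(\lambda_i)-\phi$ for the paired eigenvalues, yields
\[
F(D^2\bar Q_\delta)=F(D^2Q_\delta)-n\phi\le\Theta-n\phi.
\]
Letting $\delta\to 0^+$ and using continuity of $F$ gives $F(D^2\bar Q(\bar x_0))\le\Theta-n\phi$, as required.

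The main obstacle is the borderline case where some eigenvalue of $D^2\bar Q(\bar x_0)$ equals $\cot(\phi)$: here $\bar Q$ does not lie in the image of the rotation map on quadratics and Part~3 cannot be applied directly. The perturbation above resolves this by strictly pushing the Hessian of the test function into the invertible regime while preserving touching from below, and continuity of $F$ then restores the desired inequality in the limit.
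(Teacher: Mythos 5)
Your proof is correct and follows essentially the same route as the paper's: perturb the test quadratic from below by subtracting a small multiple of $|\bar x-\bar x_0|^2$ so that its Hessian lies strictly below $\cot(\phi)I$, rotate back by angle $-\phi$ to a genuine quadratic, apply Part 3 of Proposition \ref{prop:rot-touch} to transfer the touching, invoke the supersolution property of $u$ together with the angle rule \eqref{angles}, and pass to the limit. The only (harmless) difference is that you explicitly justify the a priori bound $D^2\bar Q(\bar x_0)\le\cot(\phi)I$ via Part 4, a step the paper leaves implicit.
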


\begin{proof}
    Let $q(\bar x)$ be a quadratic which touches $\bar u$ at $\bar x_0\in \pd(su+c|x|^2/2)(x_0)$ from below nearby.  We suppose that $x_0=\bar x_0=u(0)=0$.  In this case, $q_\ep=q-\ep|\bar x|^2/2$ also touches $\bar u$.  Since $D^2q_\ep\le (\tan(\pi/2-\phi)-\ep)I$, rotating upwards, or using angle $-\phi$ in \eqref{rot}, is possible and gives a quadratic polynomial $Q_\ep$ with $D^2Q_\ep<\infty$ and $\bar{Q_\ep}=q_\ep$.  By Part 3 in Proposition \ref{prop:rot-touch}, it follows that $Q_\ep$ touches $u$ from below at $x=0$, so $u$ being a supersolution implies $F(D^2Q_\ep)\le \Theta$.  Applying the Hessian transformation rule \eqref{angles}, we find that
    $$
    F(D^2q-\ep I)=F(D^2\bar{Q_\ep})=F(D^2Q_\ep)-n\phi\le \Theta-n\phi.
    $$
    Sending $\ep\to 0$ gives the conclusion.
\end{proof}

What makes the above argument work is that quadratics touching $\bar u$ from below are still quadratics after undoing the rotation, so the original equation informs on the Hessian of the quadratic.

\medskip
The subsolution preservation under rotation can be false.  The Lipschitz example by \cite{MS} is $C^{2,1}$ in rotated space, but non-minimal inside the gradient image of the singularity.  In this situation, the quadratics touching from above rotate back to Lipschitz cones.  Therefore, the original equation is not able to inform on the Hessian of the quadratic.  In \cite{CSY}, the viscosity solutions were assumed convex to effect this rotation.

\medskip
The main result of this section is to show the preservation of subsolutions under the conditions in Theorem \ref{thm:reg}: semi-convex with almost negative phase.  The idea is that if one angle $\theta_1$ of the touching quadratic is maximal in rotated space, then the inequality $\sum_i\theta_i\ge \Theta-n\phi$ can be satisfied without precisely understanding the other angles $\theta_2,\dots,\theta_n$.  What is needed is for the other angles to not be very negative, and $\Theta$ to not be too positive.  This argument turns out to be sharp, surprisingly.

\begin{prop}[Subsolution preservation]
\label{prop:rot}
    Let $u$ be a viscosity subsolution of $F(D^2u)=\Theta$ on open subset $\Omega\subset\re^n$ with $D^2u\ge -(\tan\theta)I$, $\Theta\in(-(n-2)\pi/2,\pi/2)$, and $\theta=(\pi/2-\Theta)/(n-1)\in(0,\pi/2)$.  Then for any $0<\phi<\pi/2-\theta$, the $\phi$-rotation $\bar u$, in \eqref{rot}, is a viscosity subsolution of $F(D^2\bar u)=\Theta-n\phi$, on the open set $\pd(su+c|x|^2/2)(\Omega)$.
\end{prop}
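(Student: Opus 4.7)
The plan is to adapt the supersolution-preservation argument to subsolutions, handling the new difficulty that a quadratic touching $\bar u$ from above may have eigenvalues too large to be rotated back by $-\phi$.

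Let $\bar q$ be a quadratic polynomial touching $\bar u$ from above near $\bar x_0 \in \partial(su+c|x|^2/2)(x_0)$. I would first show that the semi-convexity of $\bar u$ passes to $\bar q$, namely $D^2\bar q \geq -\tan(\theta+\phi)I$. By Part 4 of Proposition \ref{prop:rot-touch}, $\bar u$ is $C^{1,1}$ with $D^2\bar u \geq -\tan(\theta+\phi)I$, so $\bar u$ is differentiable at $\bar x_0$ and $D\bar q(\bar x_0) = D\bar u(\bar x_0)$. The semi-convexity yields
\[
\bar u(\bar x) \geq \bar u(\bar x_0) + D\bar u(\bar x_0)\cdot(\bar x - \bar x_0) - \tfrac{1}{2}\tan(\theta+\phi)|\bar x - \bar x_0|^2,
\]
which combined with $\bar q \geq \bar u$ near $\bar x_0$ and a second-order Taylor expansion of $\bar q$ at $\bar x_0$ forces $D^2\bar q + \tan(\theta+\phi)I \geq 0$.

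I would then split based on the largest eigenvalue of $D^2\bar q$. If every eigenvalue is strictly less than $\tan(\pi/2-\phi)$, the rotation formula inverts algebraically to produce a quadratic $Q$ with finite Hessian satisfying $\bar Q = \bar q$ (the $\phi$-rotation of $Q$ as in \eqref{rot}). Part 2 of Proposition \ref{prop:rot-touch} then gives that $Q$ touches $u$ from above near $x_0$, so the subsolution property yields $F(D^2 Q) \geq \Theta$, and the angle formula \eqref{angles} delivers $F(D^2\bar q) = F(D^2 Q) - n\phi \geq \Theta - n\phi$.

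If instead some eigenvalue $\bar\lambda_j$ of $D^2\bar q$ satisfies $\bar\lambda_j \geq \tan(\pi/2-\phi)$, so $\tan^{-1}\bar\lambda_j \geq \pi/2-\phi$, the rotation cannot be directly inverted on $\bar q$. But combining this with the lower bound $\tan^{-1}\bar\lambda_i \geq -(\theta+\phi)$ on the remaining eigenvalues from the first step,
\es{
F(D^2\bar q) \geq (\pi/2 - \phi) + (n-1)(-(\theta+\phi)) = \pi/2 - (n-1)\theta - n\phi = \Theta - n\phi,
}
using $(n-1)\theta = \pi/2 - \Theta$ from \eqref{theta}. The main obstacle is exactly this second case, where the rotation is not invertible on $\bar q$; the argument closes because the specific choice $\theta = (\pi/2-\Theta)/(n-1)$ is precisely what balances the lower Hessian bound against the phase deficit $n\phi$.
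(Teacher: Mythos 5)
Your proposal is correct and follows essentially the same route as the paper's proof: the same dichotomy on whether the largest eigenvalue of the touching quadratic reaches the threshold $\tan(\pi/2-\phi)$, the same inversion of the rotation plus Part 2 of Proposition \ref{prop:rot-touch} in the sub-threshold case, and the same eigenvalue computation $\pi/2-\phi-(n-1)(\theta+\phi)=\Theta-n\phi$ in the singular case. Your explicit justification that the semi-convexity bound passes from $\bar u$ to the touching quadratic is a detail the paper states without proof, and your argument for it is fine.
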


\begin{proof}
    \textbf{Step 1 (singular points):} Let $\bar P$ be a quadratic polynomial touching $\bar u$ at $\bar x_0\in \pd (su+c|x|^2/2)(x_0)$ from above nearby.  Suppose that the largest angle $\tan^{-1}\lambda_1(D^2\bar P)$ is maximal, or $\tan^{-1}\lambda_1(D^2\bar P)\ge \pi/2-\phi$.  Then
    \es{
    F(D^2\bar P)\ge \pi/2-\phi+\sum_{i=2}^n \tan^{-1}\lambda_i(D^2\bar P).
    }
    A problem is whether the other eigenvalues are $-\pi/2$.  Fortunately, after rotation, $\bar u$ is still semi-convex, $D^2\bar u\ge -\tan(\theta+\phi)I$.  Since $\bar P$ touches $\bar u$ from above, it also satisfies $D^2 \bar P\ge -\tan(\theta+\phi)I$.  We obtain
    \es{
    F(D^2\bar P)&\ge \pi/2-\phi+\sum_{i=2}^n (-\theta-\phi)\\
    &=\pi/2-(n-1)\theta-n\phi\\
    &=\Theta-n\phi,
    }
    provided that $\theta=(\pi/2-\Theta)/(n-1)$.
    
\smallskip
    \textbf{Step 2 ($C^{1,1}$ points):} Let $p$ be a quadratic polynomial touching $\bar u$ at $\bar x_0\in\pd(su+c|x|^2/2)(x_0)$ from above nearby.  If $D^2p<\tan(\pi/2-\phi)I$, then rotating it by angle $-\phi$ in \eqref{rot} is possible, and gives a quadratic polynomial $P$ satisfying $\bar P=p$ and $D^2P<\infty$.  By Part 2 in Proposition \ref{prop:rot-touch}, it follows that $P$ touches $u$ at $x_0$ from above nearby.  Since $u$ is a subsolution, we get $F(D^2P)\ge \Theta$.  The angle transformation rule \eqref{angles} gives $F(D^2p)\ge \Theta-n\phi$.
\end{proof}

\begin{rem}
\label{rem:codim}
    If the singular set is assumed to have higher codimension $k$, then the allowed phase and semi-convexity for subsolution preservation changes.  Namely, suppose that either a point $\bar x_0$ is singular with codimension $k$, with $\tan^{-1}\lambda_i(D^2\bar P)\ge \pi/2-\phi$ for $1\le i\le k$ and any touching polynomial $\bar P$ from above, or it is $C^{1,1}$ there, with $\tan^{-1}\lambda_i(D^2p)<\pi/2-\phi$ for all $i$ and some touching $p$ from above.  Then $F(D^2\bar P)\ge \Theta-n\phi$ provided that $\theta= (k\pi/2-\Theta)/(n-k)$.  For $0<\theta<\pi/2$, we are instead allowed $-(n-2k)\pi/2<\Theta<k\pi/2$.  We note that by a similar argument to \eqref{altEq}, smaller semi-convexity $\theta<(k\pi/2-\Theta)/(n-k)$ implies that such $u$ is already $C^{1,1}$.  
    
    The examples from Section \ref{Ex1Proof} have codimension one singularities; it is possible that they can be generalized to codimension $k$, to show that these conditions for subsolution preservation are optimal.  
    
    The regularity question for these solutions appears more subtle.  For $\theta=(k\pi/2-\Theta)/(n-k)$ and $k\ge 2$, one can show that the rotated phase $\bar\Theta>-(n-k)\pi/2$ is subcritical.  In low dimension $n=3,4$, the flatness of graphical special Lagrangian cones \cite{HNY,NV2,Y1} would give regularity of the gradient graph.  This appears unclear in higher dimension $n\ge 5$.
\end{rem}

\begin{rem}
    The phase and semi-convexity conditions in Proposition \ref{prop:rot} are sharp for subsolution preservation, by the examples of Section \ref{Ex1Proof}.  However, more conditions rule out those examples, such as $u$ being a $C^1$ viscosity solution.  It is an interesting question about whether such subsolutions are preserved, hence if they are minimal.  The examples from \cite{NV1,WdY1} are $C^{1,\alpha}$ and minimal.
    
    Yet another condition arises from expanding the viscosity testing functions from $C^2$ to $C^1$.  We say that $u\in C^0$ is a $C^1$-viscosity supersolution of $F(D^2u)=f(x)$ at $x_0$ if $F(D^2v)\le f(x)$ for any polyhomogeneous-at-$x_0$ function $v\in C^1$ which touches $u$ from below at $x_0$, where $F(D^2v(x))$ is interpreted only at the points $x$ where $v$ is twice differentiable.  One can define $C^{1,\alpha}$-viscosity solutions similarly.  

    In fact, the examples of Section \ref{Ex1Proof} are not $C^1$-viscosity supersolutions, which shows that the Dirichlet problem is not solvable in the class of $C^1$-viscosity solutions.  To briefly illustrate the idea, we note that in dimension one, the function $u(x_1)=|x_1|+|x_1|^{3/2}$ is a viscosity solution of $F(D^2u)=\pi/2-\arctan(C|x|^{1/2})=:f(x_1)$.  However, $v(x_1)=|x_1|^{3/2}+x_1^2/(2\ep)$ touches $u$ from below at $x_1=0$ for any $\ep>0$, and we have $F(D^2v)>f(x_1)$ away from the origin.  The examples in Section \ref{Ex1Proof} have a related issue.
\end{rem}

\subsection{Higher regularity by convexity of rotated equation}
\label{sec:regrot}

The rotation $\bar u$ solves $F(D^2\bar u)=\Theta-n\phi$ on the gradient image $\bar\Omega=\pd(su+c|x|^2/2)(\Omega)$.  The gradient graph $(\bar x,D\bar u(\bar x))$ is a Lipschitz submanifold of $\re^n\times\re^n$, and by the calibration argument, it is volume minimizing \cite{HL}.  We will show it is analytic.

\medskip
We can choose $\phi$ large enough for this phase to be negative supercritical.  Let
$$
\phi=\pi/2-\theta-\delta=\frac{(n-2)\pi/2+\Theta}{n-1}-\delta,
$$
for some small $0<\delta<\pi/2-\theta$ to be chosen.  The rotated phase satisfies
\es{
\Theta-n\phi&=\Theta-(n-1)\phi-\phi\\
&=-(n-2)\pi/2+(n-1)\delta-\phi\\
&=-(n-2)\pi/2+n\delta-\frac{(n-2)\pi/2+\Theta}{n-1}.
}
The last term is strictly negative.  Choosing
\es{
\delta=\frac{(n-2)\pi/2+\Theta}{2n(n-1)}=(\pi/2-\theta)/(2n)\in(0,\pi/(4n)),
}
it follows that $0<\delta<\pi/2-\theta$, so this is well defined, and 
\eqal{
\label{newTheta}
\Theta-n\phi=-(n-2)\pi/2-n\delta,
}
so the phase is negative supercritical.  We note that $-\bar u$ viscosity solves $F(D^2(-\bar u))=(n-2)\pi/2+n\delta$, which has convex superlevel set by Yuan \cite{Y3}.  Since $-\bar u$ is $C^{1,1}$, it follows from Evans-Krylov that $-\bar u$ is $C^{2,\alpha}$, with higher $C^k$ regularity by Schauder estimates, and analyticity by Morrey \cite{Mo}.  It follows that the gradient graph $(\bar x,D\bar u(\bar x))$ over $\bar \Omega$ is an analytic, volume minimizing submanifold.

\subsection{Regularity of viscosity solution by constant rank theorem}
\label{sec:reg}

We find a new maximum principle for the maximum eigenvalue of the rotated equation.  At a blow-up point, the other $n-1$ eigenvalues must equal the semi-convex lower bound.  By the convexity of the equation, these other eigenvalues must be constant, so we conclude that the maximum eigenvalue is constant.  This maximum principle is thus indirect.  It does not follow from the constant rank theorem, since the original PDE is saddle, so the rotated equation is not inverse convex.

\smallskip
To facilitate Step 2 below, we take $\Omega=B_1(0)\subset\re^n$, with $\bar\Omega=\pd(su+c|x|^2/2)(\Omega)$.  By \cite[Lemma 3.1]{CSY}, $\bar\Omega$ is open and connected.

\medskip
\textbf{Step 1 (saturation of lower bound at blow-up point).} To the contrary, suppose there is a point $\bar x_0\in\bar\Omega$ for which the maximum eigenvalue $\bar\lambda_1$ of $D^2\bar u(\bar x_0)$ is maximal: $\tan^{-1}\bar\lambda_1=\pi/2-\phi$.  Then the equation $F(D^2\bar u)=\Theta-n\phi$ becomes
\es{
\pi/2-\phi+\sum_{i=2}^n\tan^{-1}\bar\lambda_i=\Theta-n\phi.
}
By semi-convexity $D^2\bar u\ge -\tan(\theta+\phi)I$, and $\theta=(\pi/2-\Theta)/(n-1)$, we obtain
\es{
\Theta-n\phi&\ge \pi/2-\phi -(n-1)(\theta+\phi)\\
&=\Theta-n\phi.
}
This means $\tan^{-1}\bar \lambda_i=-\theta-\phi$ for $2\le i\le n$, such that the semi-convex lower bound is saturated at $\bar x_0$.

\medskip
\textbf{Step 2 (constancy of smaller eigenvalues).} We reflect and define $v=-u$.  Then by oddness and \eqref{newTheta}, $v$ solves
$$
F(D^2v)=-(\Theta-n\phi)=(n-2)\pi/2+n\delta.
$$
This is a positive supercritical phase.  By \cite{Y3}, the superlevel set is convex, so the convex combinations $\lambda_1+\lambda_2+\cdots+\lambda_k$ of the largest eigenvalues of $D^2v$ are viscosity subsolutions of the linearized operator.  For $k\le n-1$, they achieve their maximum of $k\tan(\theta+\phi)$ at $\bar x_0$, so they must be constant, by the strong maximum principle.  We conclude that for $\bar u$, the eigenvalues $\bar\lambda_i\equiv \tan(-\theta-\phi)$ are constant, for $2\le i\le n$.

\medskip
\textbf{Step 3 (maximum principle).} By $F(D^2\bar u)=\Theta-n\phi$, we find that $\tan^{-1}\bar\lambda_1\equiv \pi/2-\phi$ on $\bar\Omega$.  This contradicts Alexandrov's theorem for semi-convex viscosity solution $u(x)$, since for almost every point $x_1\in\Omega$, we have $u$ twice differentiable at $x_1$, with $D^2u(x_1)<\infty$, so $\bar\lambda_1(\bar x_1)<\pi/2-\phi$.  

\medskip
\textbf{Step 4 (regularity of $u$).} Since $D^2\bar u(\bar x)< \tan(\pi/2-\phi)$ at each point $\bar x\in \pd (su+c|x|^2/2)(\Omega)$, Part 4 in Proposition \ref{prop:rot-touch} shows that $D^2u(x)<\infty$ near each point in $\Omega$.  By the continuity of $D^2\bar u$ and the strong convexity of $su+c|x|^2/2$, this is uniform on compact subsets away from $\pd \Omega$.  Moreover, Part 5 shows that $u$ is twice differentiable everywhere in $\Omega$.


\smallskip
To confirm that $u$ is actually smooth, we recall the inverse transformation rule
\eqal{
\label{hess}
D^2u(x)=(sI+cD^2\bar u(\bar x))(cI-sD^2\bar u(\bar x))^{-1},
}
as well as the set-valued rotation map, which, since $u\in C^{1,1}_{loc}$, is now a locally Lipschitz single-valued function:
$$
\bar x(x)=cx+sy=cx+sDu(x).
$$
Because $D^2\bar u(\bar x)<\cot(\phi)I$ uniformly near any point, the rational matrix map $M\mapsto (sI+cM)(cI-sM)^{-1}$ in \eqref{hess} is well defined and analytic nearby $D^2\bar u(\bar x)$.  Its composition with $D^2\bar u(\bar x(x))$ is also a Lipschitz function of $x$, since $\bar x(x)$ is Lipschitz and $\bar u(\bar x)$ is analytic.  We conclude that $D^2u(x)$ is locally Lipschitz, hence $u\in C^{2,1}$.  Iterating this argument, we conclude that $u\in C^\infty$, then analytic by \cite{Mo}.

\subsection{Liouville theorem}

We prove Theorem \ref{thm:liou}.  Let $\Omega=\re^n$.  By strong convexity, $\pd (su+c|x|^2/2)$ is distance increasing, so $\pd(su+c|x|^2/2)(\re^n)=\re^n$.  This means $\bar u$ is an entire solution of $F(D^2\bar u)=\Theta-n\phi$ with Hessian bounds
$$
\tan(-\theta-\phi)I\le D^2\bar u\le \tan(\pi/2-\phi) I.
$$  
As in section \ref{sec:regrot}, this equation is the negative supercritical special Lagrangian equation, and the Evans-Krylov theorem is valid, for this, now, uniformly elliptic PDE with convex sublevel set \cite{Y3}.  We obtain
$$
[D^2\bar u]_{C^{\alpha}(B_{R/2}(0))}\le \frac{C(n,\Theta,\phi,\|D^2\bar u\|_{L^\infty(B_R(0))})}{R^\alpha}\to 0
$$
as $R\to \infty$.  We conclude that $\bar u$ is a quadratic polynomial.  Its gradient graph $(\bar x,D\bar u(\bar x))$ is a plane, so its counterclockwise rotation $(x,Du(x))$ by $\phi$ is as well.  This completes the proof.

\subsection{Effective estimate}
We begin with a few preliminary results. The first says that given a compact continuous curve, one can find a chain of pairwise disjoint and sequentially tangent balls of the same size centered on the curve that connect the endpoints.

\begin{prop}\label{CurveCovering}
Let $\gamma : [0,\,1] \rightarrow \mathbb{R}^n$ be continuous, and let $r > 0$. There exist $k \in \mathbb{N} \cup \{0\}$ and numbers $0 = t_0 < ... < t_k < 1$ such that
\begin{enumerate}
\item $\{B_r(\gamma(t_i))\}_{i = 0}^k$ are pairwise disjoint,
\item $\partial B_r(\gamma(t_i)) \cap \partial B_r(\gamma(t_{i+1})) \neq \emptyset$ for all $0 \leq i < k$, and
\item $\gamma(1) \subset \overline{B_{2r}(\gamma(t_k))}$.
\end{enumerate}
\end{prop}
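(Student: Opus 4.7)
The plan is a greedy construction along $\gamma$ using a \emph{last-exit} rule. Set $t_0=0$. Inductively, suppose $0=t_0<t_1<\cdots<t_i$ have been chosen in $[0,1)$. If $\gamma(1)\in\overline{B_{2r}(\gamma(t_i))}$, stop and take $k=i$. Otherwise, define
$$t_{i+1}:=\sup\bigl\{t\in[t_i,1]\,:\,\gamma(t)\in\overline{B_{2r}(\gamma(t_i))}\bigr\}.$$
Continuity of $\gamma$ at $t_i$ places $\gamma(t)$ in the open ball $B_{2r}(\gamma(t_i))$ for $t$ in some right-neighborhood of $t_i$, so $t_{i+1}>t_i$; and the non-stopping hypothesis $\gamma(1)\notin\overline{B_{2r}(\gamma(t_i))}$, combined with continuity of $\gamma$ at $1$, forces $t_{i+1}<1$.

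The set defining the supremum is a closed subset of $[t_i,1]$, so the sup is attained and $\gamma(t_{i+1})\in\overline{B_{2r}(\gamma(t_i))}$. If $\gamma(t_{i+1})$ lay in the \emph{open} ball, continuity would let the inclusion persist slightly beyond $t_{i+1}$, contradicting maximality; hence $|\gamma(t_{i+1})-\gamma(t_i)|=2r$, which is (2). The decisive feature of the last-exit choice is that $|\gamma(t)-\gamma(t_i)|>2r$ for every $t>t_{i+1}$. Applying this with $t=t_j$ for each $j\geq i+2$ yields $|\gamma(t_j)-\gamma(t_i)|>2r$; together with the equality at $j=i+1$, every pair of distinct centers is at distance at least $2r$, which is (1).

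Termination is by compactness: the centers $\{\gamma(t_i)\}$ form a pairwise $2r$-separated subset of the compact set $\gamma([0,1])\subset\mathbb{R}^n$, so the procedure must halt after finitely many steps $k$, at which point the stopping criterion furnishes (3). The only subtle point is the choice of \emph{last-exit} rather than first-exit: the naive first-exit recipe (take $t_{i+1}$ to be the first time $\gamma$ leaves $\overline{B_{2r}(\gamma(t_i))}$) still gives (2) but may fail (1), since $\gamma$ could re-enter the ball at a later time and a subsequent center be placed inside it. The last-exit rule rules out such re-entry by definition and is what makes the greedy construction go through.
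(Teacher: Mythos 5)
Your proof is correct and is essentially the same as the paper's: both use the last-exit (supremum) definition of $t_{i+1}$, derive tangency from maximality and disjointness from the fact that $|\gamma(t)-\gamma(t_i)|>2r$ for all $t>t_{i+1}$, and terminate by a packing argument (the paper phrases this as an explicit volume bound, you as finiteness of a $2r$-separated set in a compact set, which is the same idea). No gaps.
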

\begin{proof}
Let $t_0 := 0$, and for $i \geq 1$ define $t_i$ inductively by
$$t_{i} := \sup\{t \in [0,\,1] : |\gamma(t) - \gamma(t_{i-1})| \leq 2r\}.$$
It is obvious that $t_i$ are non-decreasing, that $t_i < t_{i+1}$ whenever $t_i < 1$, and that
$$|\gamma(t_{i+1}) - \gamma(t_i)| = 2r \text{ whenever } t_{i+1} < 1.$$
Moreover, if $t_{i + 1} < 1$, then $|\gamma(t) - \gamma(t_i)| > 2r$ for all $t > t_{i+1}$. We conclude that if $t_{i+1} < 1$, then
$$B_r(\gamma(t_j)) \cap B_r(\gamma(t_i)) = \emptyset$$
for all $j > i$. This establishes properties (1) and (2) for any $k$ such that $t_k < 1$.

Suppose that $t_K < 1$. By the disjointness of the collection $\{B_r(\gamma(t_i))\}_{i = 0}^K$ we have that
$$(K+1)|B_r| \leq |B_{\sup_{t \in [0,\,1]}|\gamma(t) - \gamma(0)| + r}|.$$
In particular, there exists a first number $k$,
$$0 \leq k \leq \left(1 + \frac{\sup_{t \in [0,\,1]}|\gamma(t) - \gamma(0)|}{r}\right)^n - 1,$$
such that $t_k < t_{k+1} = 1$.
\end{proof}

The second is a version of the weak Harnack inequality.

\begin{lem}\label{HarnackChain}
Assume that $a_{ij}(x)w_{ij} \leq 0$ in $B_{4r}(x_1) \subset \mathbb{R}^n$, where $a_{ij}$ has ellipticity constants $0 < \lambda \leq \Lambda < \infty$ and $w \geq 0$. Assume further that $|x_2 - x_1| \leq 2r$. Then
$$\int_{B_r(x_1)} w^p \leq C\int_{B_r(x_2)} w^p$$
for some $C(n,\,\lambda,\,\Lambda) > 0$ and $p(n,\,\lambda,\,\Lambda) > 0$.
\end{lem}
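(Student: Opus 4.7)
The plan is to deduce the lemma from the standard Krylov--Safonov weak Harnack inequality, which asserts that for every nonnegative $v$ with $a_{ij} v_{ij} \leq 0$ in $B_{2R}(y)$,
$$\int_{B_R(y)} v^p \,\leq\, C\,|B_R|\,\inf_{B_R(y)} v^p$$
for some $p = p(n,\lambda,\Lambda) > 0$ and $C = C(n,\lambda,\Lambda) > 0$. The only extra ingredient is a one-line geometric observation that lets me convert $\inf_{B_{2r}(x_1)} w^p$ into an integral of $w^p$ over $B_r(x_2)$, without ever needing to recenter the equation at $x_2$.

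First I apply the weak Harnack with center $y = x_1$ and radius $R = 2r$; this is legitimate because the supersolution inequality is assumed on $B_{4r}(x_1)$. Combined with monotonicity of the integral this gives
$$\int_{B_r(x_1)} w^p \,\leq\, \int_{B_{2r}(x_1)} w^p \,\leq\, C\,|B_{2r}|\,\inf_{B_{2r}(x_1)} w^p.$$
Next I exhibit a ball of radius $r/2$ contained simultaneously in $B_{2r}(x_1)$ and $B_r(x_2)$. Setting $z := x_2 + \tfrac{1}{4}(x_1 - x_2)$ and using $|x_1 - x_2| \leq 2r$ gives $|z - x_2| \leq r/2$ and $|z - x_1| \leq 3r/2$, so the triangle inequality yields $B_{r/2}(z) \subset B_r(x_2) \cap B_{2r}(x_1)$. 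Therefore
$$\inf_{B_{2r}(x_1)} w^p \,\leq\, \inf_{B_{r/2}(z)} w^p \,\leq\, \frac{1}{|B_{r/2}|}\int_{B_{r/2}(z)} w^p \,\leq\, \frac{1}{|B_{r/2}|}\int_{B_r(x_2)} w^p.$$
Chaining the two displays and absorbing the dimensional ratio $|B_{2r}|/|B_{r/2}| = 4^n$ into the constant produces the lemma with exponent $p$ inherited from the weak Harnack and a new $C(n,\lambda,\Lambda)$.

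There is no serious obstacle. The one point requiring care is that the weak Harnack must be centered at $x_1$, since the domain $B_{4r}(x_1)$ of the equation need not contain any $B_{4r}(x_2)$; the explicit choice of the intermediate ball $B_{r/2}(z)$ is what bridges the asymmetry between the hypothesis (centered at $x_1$) and the symmetric conclusion claimed in the lemma.
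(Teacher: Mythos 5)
Your proof is correct and follows essentially the same route as the paper: apply the Krylov--Safonov weak Harnack inequality centered at $x_1$, then bound the resulting infimum by an average of $w^p$ over a ball sitting inside $B_r(x_2)$. The only cosmetic difference is that the paper takes the Harnack ball of radius $3r$ so that $B_r(x_2)$ is contained in it outright, whereas you use radius $2r$ and bridge the gap with the intermediate ball $B_{r/2}(z)$; both yield the same conclusion with universal constants.
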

\begin{proof}
By the weak Harnack inequality (see e.g. Theorem 9.22 in \cite{GT} or Theorem 4.8 in \cite{CC}) we have for universal $C,\,p > 0$ that
$$\int_{B_{3r}(x_1)}w^p \leq C|B_r|\inf_{B_{3r}(x_1)} w^p.$$
The right hand side is bounded above by $C|B_r|\inf_{B_r(x_2)} w^p \leq C\int_{B_r(x_2)}w^p$.
\end{proof}

\begin{proof}[{\bf Proof of Effective Bound}:]
We fix $\phi$ as above so that the phase of the equation for $\bar{u}$ is negative supercritical.
We call a constant universal if it depends only on $n$ and $\Theta \in (-(n-2)\pi/2,\, \pi/2)$, and we let $c,\,C$ denote small and large positive universal constants that may change from line to line. We also let
$$L := \|Du\|_{L^{\infty}(B_1)}.$$

The map $\bar{x}$ is distance-increasing with universal lower bound (recall that $\bar{x}$ is the gradient of $\cos(\phi)|x|^2/2 + \sin(\phi)u$, and the Hessian of this function has a positive universal lower bound), so
$$B_c(y) \subset \bar{x}(B_1)$$
for all $y \in \bar{x}(B_{1/2})$. Let $x \in B_{1/2}$, and apply Proposition \ref{CurveCovering} to the curve 
$$\gamma(t) := \bar{x}(tx)$$ 
with $r := c/4$, getting a chain of pairwise disjoint and sequentially tangent balls $\{B_{c/4}(y_i)\}_{i = 0}^k,\, y_i \in \bar{x}(B_{1/2}),\, y_0 = \bar{x}(0),\, |\bar{x}(x) - y_k| \leq c/2$. If $w$ is a nonnegative supersolution to the linearized special Lagrangian equation at $\bar{u}$ (which has universal ellipticity constants) in $\bar{x}(B_1)$ then we can apply Lemma \ref{HarnackChain} and the weak Harnack inequality (with $r = c/4$) to get
$$\int_{B_{c/4}(\bar{x}(x))} w^p \leq C\int_{B_{c/4}(y_k)} w^p \leq ... \leq C^{k+1}\int_{B_{c/4}(\bar{x}(0))}w^p \leq C^{k+2}w^p(\bar{x}(0))$$
with $p > 0$ universal. Thus,
$$\int_{B_{c/4}(y)} w^p \leq C^kw^p(\bar{x}(0))$$
for all $y \in \bar{x}(B_{1/2})$. By Vitali's covering lemma, there exist $N$ disjoint balls of radius $c/12$ centered in $\bar{x}(B_{1/2})$ whose three-times dilations cover $\bar{x}(B_{1/2})$. Thus,
$$\int_{\bar{x}(B_{1/2})} w^p \leq NC^kw^p(\bar{x}(0)).$$

We now claim that
\begin{equation}\label{VolBound}
|\bar{x}(B_1)| \leq C(1+L).
\end{equation}
Indeed, by Lemma \ref{Lambda2}, all but one eigenvalue of $D^2u$ is universally bounded. In particular,
$$\det D\bar{x} \leq C(C + \Delta u).$$
The estimate (\ref{VolBound}) follows from this and the area formula.

It follows from (\ref{VolBound}) that $N \leq C(1+L)$ and that $k \leq C(1+L)$, thus
\begin{equation}\label{GeneralWeakHarnack}
\int_{\bar{x}(B_{1/2})} w^p \leq e^{C(1 + L)}w^p(\bar{x}(0)).
\end{equation}

We now conclude. Integrating $\Delta u$ by parts in $B_{1/2}$ and using the assumed lower bound $D^2u \geq -\tan(\theta)I$ gives
$$\int_{B_{1/2}} (\lambda_1)_+ \leq C(1+L).$$ 
We thus have by Chebyshev's inequality that
$$|\{\lambda_1 < C(1+L)\} \cap B_{1/2}| \geq |B_{1/2}|/2$$
for $C$ large universal. Since $\bar{x}$ is distance-increasing with universal lower bound, we conclude that 
$$\cot^{-1}(\bar{\lambda}_1)-\phi = \cot^{-1}(\lambda_1) \geq c/(1+L)$$ 
on a set of positive universal measure in $\bar{x}(B_{1/2})$. On this same set we must have
$$w := \sum_{i > 1} [\bar{\lambda}_i + \tan(\theta + \phi)] \geq c/(1+L).$$
Here and below we use that
$$\sum_{i > 1} [\tan^{-1}(\bar{\lambda}_i) + (\theta + \phi)] = \cot^{-1}(\bar{\lambda}_1) - \phi,$$
and that the terms in the sum on the left are nonnegative. We conclude using that $w$ is a nonnegative supersolution to the linearized equation (recall that the phase is negative supercritical) and (\ref{GeneralWeakHarnack}) that
$$[c/(1+L)]^p \leq e^{C(1 + L)}w^p(\bar{x}(0)) \quad \Longrightarrow \quad e^{-C(1 + L)} \leq w(\bar{x}(0)).$$
Thus
$$\cot^{-1}(\lambda_1(0)) = \cot^{-1}(\bar{\lambda}_1(\bar{x}(0))) - \phi \geq e^{-C(1+L)},$$
hence
$$\lambda_1(0) \leq e^{C(1 + L)}.$$
Estimates for the higher-order derivatives of $u$ can be obtained by differentiating the relation (\ref{hess}), using the bound for $D^2u(0)$, and using the fact that 
$$|D^k\bar{u}(\bar{x}(0))| \leq C(n,\,k,\,\Theta).$$
This last inequality comes from the universal $C^{1,\,1}$ bound for $\bar{u}$, Evans-Krylov, and Schauder estimates.
\end{proof}

\begin{rem}
The exponential dependence of our estimate on $\|Du\|_{L^{\infty}(B_1)}$ cannot be improved, see Section \ref{Ex2Proof}.
\end{rem}

\begin{rem}
Non-quantitative a priori interior estimates follow quickly from a compactness argument exploiting the regularity of viscosity solutions (above) and Savin's small perturbations theorem (\cite{S}).
\end{rem}

\begin{rem}
\label{rem:sharp}
Our method gives improved effective Hessian estimates for solutions to (\ref{eq}) if $u$ is convex, or if $\Theta$ is supercritical. More precisely, one can bound the volume of the image of the relevant map $\bar{x}$ in those cases by $\|Du\|_{L^{\infty}}^{n-1}$, which by our method gives Hessian bounds that depend exponentially on $\|Du\|_{L^{\infty}}^{n-1}$. Previous estimates depend exponentially on $\|Du\|_{L^{\infty}}^{2(n-1)}$ (\cite{WdY2}). 

In our method, the estimate is in fact dictated by the number of balls $k$ in the Harnack chain. One could thus improve further to exponential dependence on $\|Du\|_{L^{\infty}}$ in those cases if every pair of points in the image of $\bar{x}$ could be connected by a curve in the image of length $\sim \|Du\|_{L^{\infty}}$ (e.g. if the image were star-shaped). The examples in Section \ref{Ex2Proof} show that such an estimate (exponential in $\|Du\|_{L^{\infty}}$) would be sharp.
\end{rem}

\section{Proof of Theorem \ref{thm:Ex} (Sharpness of assumptions)}\label{Ex1Proof}

{\bf Step 1.} Fix arbitrary parameters $\lambda > 0$ and $a_i \neq 0,\, i \geq 4$. Let
$$\Phi(x) := \frac{\lambda x_1^2}{2(1+x_3)} + \frac{\lambda x_2^2}{2(1-x_3)} + \sum_{i \geq 4} [a_ix_i^2/2 + x_i^4/12].$$
Then $D^2\Phi$ has rank $n-1$, since the first two terms are translations of one-homogeneous functions of two variables (thus have Hessians of rank $1$) and the remaining terms split off. We also have the following key property:

\begin{lem}\label{NonDegen}
$F(D^2\Phi)$ has a non-degenerate local minimum at $0$.
\end{lem}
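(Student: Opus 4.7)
The plan is to exploit the additive block structure of $\Phi$. Writing $\Phi = \Psi(x_1,x_2,x_3) + \sum_{i\ge 4}\psi_i(x_i)$ with
\[
\Psi(x_1,x_2,x_3) := \frac{\lambda x_1^2}{2(1+x_3)} + \frac{\lambda x_2^2}{2(1-x_3)}, \qquad \psi_i(x_i) := \frac{a_i x_i^2}{2} + \frac{x_i^4}{12},
\]
the Hessian $D^2\Phi$ is block-diagonal, with a $3\times 3$ block $D^2\Psi$ on the coordinates $(x_1,x_2,x_3)$ and scalar blocks $\psi_i''(x_i) = a_i + x_i^2$ for $i\ge 4$. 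Consequently,
\[
F(D^2\Phi(x)) = F_3(D^2\Psi(x_1,x_2,x_3)) + \sum_{i\ge 4}\tan^{-1}(a_i + x_i^2),
\]
where $F_3$ is the sum-of-arctangents functional on symmetric $3\times 3$ matrices. Each one-variable summand has a non-degenerate local minimum at $x_i=0$ with second derivative $2/(1+a_i^2) > 0$, and the cross-partials between the $3$-block and the $i\ge 4$ directions vanish by the splitting. Thus it suffices to show that $x \mapsto F_3(D^2\Psi)$ has a non-degenerate local minimum at $0 \in \re^3$.

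For this reduced problem I would use that $F_3$ is smooth on symmetric matrices (e.g.\ via $F_3(M) = \mathrm{Im}\,\log\det(I + iM)$) and apply the chain rule at $M_0 := D^2\Psi(0) = \mathrm{diag}(\lambda,\lambda,0)$. Writing $M_k := \pd_{x_k}D^2\Psi(0)$ and $M_{k\ell} := \pd^2_{x_k x_\ell}D^2\Psi(0)$, one has
\[
\pd_k F_3(D^2\Psi)\big|_0 = F_3'(M_0)[M_k], \qquad \pd^2_{k\ell}F_3(D^2\Psi)\big|_0 = F_3''(M_0)[M_k, M_\ell] + F_3'(M_0)[M_{k\ell}],
\]
with $F_3'(M_0)[N] = \mathrm{tr}\bigl((I+M_0^2)^{-1} N\bigr)$. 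A direct computation of the third derivatives of $\Psi$ at $0$ gives $M_1 = -\lambda(e_1 e_3^\top + e_3 e_1^\top)$, $M_2 = \lambda(e_2 e_3^\top + e_3 e_2^\top)$, and $M_3 = \mathrm{diag}(-\lambda,\lambda,0)$. Since $(I + M_0^2)^{-1}$ is diagonal, while $M_1$ and $M_2$ have zero diagonal and the two nonzero diagonal entries of $M_3$ are weighted by the common coefficient $1/(1+\lambda^2)$, all three first partials vanish, so $0$ is a critical point.

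For the Hessian I would use the standard second-derivative formula for a spectral function,
\[
F_3''(M_0)[N,N] = \sum_i g''(\mu_i) N_{ii}^2 + 2\sum_{i<j}\frac{g'(\mu_i) - g'(\mu_j)}{\mu_i - \mu_j} N_{ij}^2, \qquad g = \tan^{-1},
\]
together with $F_3'(M_0)[M_{kk}]$, whose nonzero entries come out to $(M_{11})_{33} = (M_{22})_{33} = 2\lambda$ and $(M_{33})_{11} = (M_{33})_{22} = 2\lambda$. After a single algebraic simplification the negative $g''$ contribution (from the off-diagonal pairing of $\mu = \lambda$ with $\mu = 0$ along the $x_1,x_2$ directions, and from the diagonal $g''(\lambda)$ term along $x_3$) is dominated cleanly by the $F_3'(M_0)[M_{kk}]$ term, giving $\pd_1^2 = \pd_2^2 = 2\lambda/(1+\lambda^2)$ and $\pd_3^2 = 4\lambda/(1+\lambda^2)^2$, all strictly positive. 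The off-diagonal entries of the Hessian vanish by inspection: $F_3''(M_0)[M_k, M_\ell] = 0$ for $k\neq \ell$ either because $M_k, M_\ell$ have disjoint off-diagonal support (case $k=1,\ell=2$) or because one is diagonal while the other has zero diagonal (cases $k\in\{1,2\},\ell=3$), and each $M_{k\ell}$ for $k\neq \ell$ has zero diagonal.

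The main technical point is the degenerate-eigenvalue case of the second-derivative formula at $M_0$, whose eigenvalue $\lambda$ has multiplicity two. Fortunately, the only index pair with $\mu_i = \mu_j = \lambda$ is $(i,j) = (1,2)$, and $N_{12} = 0$ for all relevant $N \in \{M_1, M_2, M_3, M_{k\ell}\}$, so the degenerate difference-quotient case never actually enters the off-diagonal sum, making the final positivity count completely explicit.
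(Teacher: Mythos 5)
Your proof is correct: I checked each of the quantities you assert ($M_0=\mathrm{diag}(\lambda,\lambda,0)$, the third-derivative matrices $M_1,M_2,M_3$, the fourth-derivative traces $(M_{11})_{33}=(M_{22})_{33}=2\lambda$ and $M_{33}=\mathrm{diag}(2\lambda,2\lambda,0)$, the vanishing of the first partials, and the diagonal Hessian entries $\tfrac{2\lambda}{1+\lambda^2},\tfrac{2\lambda}{1+\lambda^2},\tfrac{4\lambda}{(1+\lambda^2)^2},\tfrac{2}{1+a_i^2}$), and they all come out as you state. The route is genuinely different from the paper's. The paper diagonalizes $D^2\Phi(x)$ explicitly for $x$ near $0$ — the nontrivial eigenvalues are $h(x)\pm g(x)$ with $h,g$ given in closed form — and then Taylor-expands the scalar composition $2f(h)+f''(h)g^2+\dots$ with $f=\tan^{-1}$, using that $\nabla h(0)=0$ and $g^2=O(|x|^2)$; the positivity of the $(3,3)$ entry reduces to the same inequality $\tfrac{4\lambda}{1+\lambda^2}>\tfrac{4\lambda^3}{(1+\lambda^2)^2}$ that your combined term $F_3'(M_0)[M_{33}]+F_3''(M_0)[M_3,M_3]$ encodes. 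You instead differentiate the spectral function $F_3$ directly at $M_0$ via the gradient formula $(I+M_0^2)^{-1}$ and the divided-difference (Daleckii--Krein) second-derivative formula. What your approach buys is that it never needs the explicit eigenvalue branches $h\pm g$ (which are particular to this $\Phi$) and it makes the vanishing of the off-diagonal Hessian entries a matter of disjoint supports rather than of tracking the expansion of $g^2$; the price is the care required at the repeated eigenvalue $\lambda$ of $M_0$, which you correctly dispose of by noting that every relevant perturbation has vanishing $(1,2)$ entry. Both arguments yield the same positive-definite diagonal Hessian, and both work for all $\lambda>0$, which is the point of the lemma as stated (versus the small-$\lambda$ case in the earlier reference).
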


This was shown in the case $n = 3,\, \lambda > 0$ small in Lemma 2.1 of \cite{MS}. However, we will need this result for all $\lambda > 0$ to demonstrate the optimality of Theorem \ref{thm:reg}.

\begin{proof}
The eigenvalues of $D^2\Phi$ are
$$(h(x) + g(x),\, h(x) - g(x),\, 0,\, a_4 + x_4^2,\, ...,\, a_n + x_n^2),$$
where for $a := (1+x_3)^{-1}$ and $b := (1-x_3)^{-1}$ we have
$$h(x) = \lambda\left(\frac{1}{1-x_3^2} + \frac{a^3}{2}x_1^2 + \frac{b^3}{2} x_2^2\right) \text{ and }$$
$$g(x) = \lambda\left(\frac{1}{4} (2abx_3 + (b^3x_2^2 - a^3x_1^2))^2 + a^3b^3x_1^2x_2^2 \right)^{1/2}.$$
Let $f(s) := \tan^{-1}(s)$. Using Taylor expansion we have
$$F(D^2\Phi) = 2f(h) + f''(h)g^2 + \sum_{i \geq 4} [f(a_i) + f'(a_i)x_i^2] + O(|x|^4),$$
whence
$$D^2(F(D^2\Phi))(0) = 2f'(h(0))D^2h(0) + f''(h(0))D^2(g^2)(0) + 2 \sum_{i \geq 4} f'(a_i)e_i \otimes e_i.$$
Here we have used that $\nabla h(0) = 0$ and that $g^2 = O(|x|^2)$. 
The first term is diagonal with entries $\frac{2\lambda}{1+\lambda^2}(1,\,1,\,2,\,0,\,...,\,0)$. The second term can be written 
$$-\frac{4\lambda^3}{(1+\lambda^2)^2} e_3 \otimes e_3.$$
We thus just need
$$\frac{4\lambda}{1+\lambda^2} > \frac{4\lambda^3}{(1+\lambda^2)^2} \Leftrightarrow 1 > \frac{\lambda^2}{1+\lambda^2},$$
which is true for all $\lambda > 0$.
\end{proof}

The rest of the construction is essentially the same as in \cite{MS}. We recall the steps for the reader's convenience. 

For the remainder of the section, positive constants depending only on $\lambda,\,\{a_i\}_{i = 4}^n,$ and  $n$ are called universal.

\vspace{2mm}

{\bf Step 2.} For any positive $\epsilon < \epsilon_0$ sufficiently small universal, the connected component $K_{\epsilon}$ of the set
$\{F(D^2\Phi) < F(D^2\Phi(0)) + \epsilon^2 := c_{\epsilon}^*\}$ containing $0$ is analytic and uniformly convex, with diameter bounded by $C\epsilon$, $C$ universal. Let $\nu$ denote the outer unit normal to $\partial K_{\epsilon}$. In a small neighborhood of $\partial K_{\epsilon}$ we solve via Cauchy-Kovalevskaya the equation
$$F(D^2v) = c_{\epsilon}^*, \quad (v,\,v_{\nu})|_{\partial K_{\epsilon}} = (\Phi,\,\Phi_{\nu})|_{\partial K_{\epsilon}}.$$
Finally, we let
$$w = \begin{cases}
\Phi \text{ in } K_{\epsilon}, \\
v \text{ outside } K_{\epsilon}.
\end{cases}$$
Since $F(D^2v) = F(D^2\Phi)$ on $\partial K_{\epsilon}$, ellipticity forces $D^2v = D^2\Phi$ on $\partial K_{\epsilon}$, so $w \in C^{2,\,1}$. Thus
\begin{equation}\label{Closeness}
|D^2w - D^2\Phi(0)| \leq C\epsilon, \quad C \text{ universal }
\end{equation}
in a small enough neighborhood of $K_{\epsilon}$. 

We let
$$S := \text{sign}\left(\Pi_{i \geq 4}a_i\right).$$
\begin{lem}\label{DetDrop}
We have $S\det D^2w < 0$ outside $K_{\epsilon}$.
\end{lem}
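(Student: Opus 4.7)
Proof plan: The strategy is to reduce the sign of $\det D^2 w$ to the sign of the unique eigenvalue of $D^2 w$ close to $0$, and then pin down that sign from the Cauchy--Kovalevskaya problem for $v$.

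By \eqref{Closeness}, for $\epsilon$ sufficiently small, exactly one eigenvalue $\mu$ of $D^2 w$ lies within $C\epsilon$ of $0$, while the remaining $n-1$ eigenvalues are close to the non-zero eigenvalues of $D^2\Phi(0)$, whose product has sign $S$. Factoring,
$$\det D^2 w = \mu \cdot \lambda^2 \prod_{i \geq 4} a_i \cdot (1 + O(\epsilon)),$$
so the claim $S\det D^2 w < 0$ reduces to proving $\mu < 0$ everywhere outside $K_\epsilon$.

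Set $h := v - \Phi$. The matching $D^2 v = D^2\Phi$ on $\partial K_\epsilon$, noted just before \eqref{Closeness}, forces $h$, $Dh$, and $D^2 h$ all to vanish on $\partial K_\epsilon$. Since every tangential derivative of an entry $h_{ij}$ is then a tangential derivative of zero, the only third partial of $h$ that can be nonzero on $\partial K_\epsilon$ is the triple normal derivative $h_{\nu\nu\nu}$. Differentiating $F(D^2 v) = c^*_\epsilon$ once in the outer normal $\nu$ at a point of $\partial K_\epsilon$ then collapses to
$$F_{\nu\nu}(D^2\Phi)\, h_{\nu\nu\nu} = -\partial_\nu F(D^2\Phi) < 0,$$
the strict negativity coming from Lemma \ref{NonDegen} (non-degenerate minimum of $F\circ D^2\Phi$ at $0$) and the fact that $\partial K_\epsilon$ bounds a sublevel set around $0$. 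Ellipticity gives $F_{\nu\nu}(D^2\Phi) > 0$, so $h_{\nu\nu\nu} < 0$ on $\partial K_\epsilon$.

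Now use $\det D^2\Phi \equiv 0$ together with the cofactor formula. The cofactor matrix of $D^2\Phi$ equals $\bigl(\prod_{i<n}\mu_i(\Phi)\bigr)\xi\xi^T$, where $\xi$ is the unit null vector of $D^2\Phi$ and the scalar factor has sign $S$. Combining this with the vanishing of $\partial_\nu h_{ij}$ on $\partial K_\epsilon$ for $(i,j) \neq (\nu,\nu)$ gives
$$\partial_\nu \det D^2 w \Big|_{\partial K_\epsilon} = \Big(\prod_{i<n}\mu_i(\Phi)\Big)\, h_{\nu\nu\nu}\, (\xi\cdot\nu)^2,$$
of sign $-S$ strictly at every boundary point where $\xi$ is not tangent to $\partial K_\epsilon$. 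A first-order Taylor expansion then yields $S\det D^2 w < 0$ in a one-sided exterior neighborhood of each such point.

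The main obstacle is the subset of $\partial K_\epsilon$ where $\xi$ is tangent, since there the first-order expansion vanishes. I would address this by invoking the analyticity of $v$ on its Cauchy--Kovalevskaya neighborhood: $\det D^2 w$ is then analytic outside $K_\epsilon$ and is non-vanishing of sign $-S$ on a non-empty open subset by the above. The Cauchy--Kovalevskaya uniqueness rules out $\det D^2 w \equiv 0$ on any open piece of the exterior (such a rank-deficiency would, together with the Cauchy data $D^2v = D^2\Phi$ on $\partial K_\epsilon$, be incompatible with $F(D^2 v) = c^*_\epsilon \neq F(D^2\Phi)$). Hence the zero set of $\det D^2 w$ is a proper analytic subvariety, and strict $S \det D^2 w < 0$ holds throughout a connected one-sided exterior neighborhood by continuity, completing the proof.
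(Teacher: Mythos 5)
Your reduction to the sign of the near-zero eigenvalue and your first-order computation at points of $\partial K_{\epsilon}$ where the null direction $\xi$ of $D^2\Phi$ is transverse to the boundary are correct, and they reproduce the first half of the argument the paper is citing (Lemma 2.3 of \cite{MS}): $D^3h|_{\partial K_{\epsilon}} = h_{\nu\nu\nu}\,\nu\otimes\nu\otimes\nu$, then $h_{\nu\nu\nu}<0$ from differentiating $F(D^2v)=c_{\epsilon}^*$, and $\partial_\nu \det D^2 v = \bigl(\text{product of nonzero eigenvalues}\bigr)(\xi\cdot\nu)^2 h_{\nu\nu\nu}$, which has sign $-S$ when $\xi\cdot\nu\neq 0$.

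The gap is your treatment of the tangency locus $\{\xi\perp\nu\}\subset\partial K_{\epsilon}$. This set is nonempty: $\xi(0)=e_3$ and the uniformly convex hypersurface $\partial K_{\epsilon}$ has normals in every direction, so the tangency locus is an ``equator'' separating $\partial K_{\epsilon}$ into two caps. Your analyticity argument does not close this: knowing that an analytic function is strictly negative on one open subset of a connected domain and that its zero set is a proper analytic subvariety does \emph{not} determine its sign elsewhere --- an analytic function can change sign across its zero set (e.g.\ $f(x)=x_1$), and ``by continuity'' only propagates the sign up to the first zero, so $S\det D^2w$ could a priori be positive in the exterior region abutting one of the two caps. (The intermediate claim that Cauchy--Kovalevskaya uniqueness forbids $\det D^2v\equiv 0$ on an open set is also unjustified as stated: rank deficiency is perfectly compatible with $F(M)=c_{\epsilon}^*$.) What is actually required at the tangency points is a genuine second-order expansion of $\det D^2 v$ in the exterior direction; this is exactly the role of the condition $SG_{\xi\xi}>0$ (positivity, up to the sign $S$, of the $\xi\xi$-cofactor) highlighted in the paper's proof and carried out in Lemma 2.3 of \cite{MS}. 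Without that computation the lemma is not established.
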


In particular, $w$ solves the degenerate Bellman equation
$$\max\{F(D^2w) - c_{\epsilon}^*,\, S\det D^2w\} = 0,$$
and the zero eigenvalue of $D^2w$ in $\overline{K_{\epsilon}}$ becomes strictly negative when we step outside $\overline{K_{\epsilon}}$.

\begin{proof}
The proof follows nearly verbatim that of Lemma 2.3 in \cite{MS}. The only modifications are that the function $G := \det D^2w$ satisfies 
$$SG_{\nu\nu}(D^2w(x)) > 0$$
at points on $\partial K_{\epsilon}$ where the zero eigendirection $\xi$ of $D^2w$ is not tangent to $\partial K_{\epsilon}$, and that
$$SG_{\xi\xi} > 0$$
at points on $\partial K_{\epsilon}$ where $\xi \perp \nu$.
\end{proof}

{\bf Step 3.} We have that $\nabla w$ maps a small exterior neighborhood of $\overline{K_{\epsilon}}$ diffeomorphically to a small exterior neighborhood of the smooth compact hypersurface with boundary $\Gamma := \nabla w\left(\overline{K_{\epsilon}}\right)$, which is contained in the paraboloid graph
$$\{(z_1,\,z_2,\,(z_2^2-z_1^2)/(2\lambda),\, z_4,\,...,\,z_n)\}.$$
The proof is identical to Lemma 2.4 in \cite{MS}, up to replacing the map $H$ there by
$$H(x) := (w_1,\,w_2,\,x_3,\,w_4,\,...,\,w_n)$$
and noting that the sign of $\det DH$ is $S$.

{\bf Step 4.} Finally, we let $u = -w^*$, i.e.
$$u(\nabla w(x)) = w(x) - x \cdot \nabla w(x),$$
for $x$ in a small neighborhood of $\overline{K_{\epsilon}}$. Away from $\Gamma$, the function $u$ is analytic and satisfies
$$D^2u = -(D^2w)^{-1},$$
hence
\begin{equation}\label{uEqn}
F(D^2u) = c_{\epsilon}^* + \frac{\pi}{2}\left[2-n+2(\# a_i < 0)\right].
\end{equation}
Moreover, $D^2u$ has one eigenvalue that is close to $+ \infty$, while by (\ref{Closeness}) the remaining eigenvalues are within $C\epsilon$ of $(-1/\lambda,\,-1/\lambda,\,-1/a_4,\,...,\,-1/a_n),\, C$ universal. Moreover, we have exactly as in pages 2936-2937 of \cite{MS} that $u_{33} > 0$ tends to $+\infty$ on $\Gamma$, and that $u$ has an upwards Lipschitz singularity on interior points of $\Gamma$. In particular, $u$ is a viscosity subsolution of (\ref{uEqn}) in a neighborhood of $\Gamma$. To see that it is a viscosity supersolution, note as in pages 2936-2937 of \cite{MS} that $u$ is the uniform limit of the functions $-(w - x_3^2/k)^*$, which are analytic supersolutions of (\ref{uEqn}) in a uniform neighborhood of $\Gamma$.

{\bf Step 5.} To prove Theorem \ref{thm:Ex} we play with the parameters $\lambda,\,a_i,\,\epsilon$. 

We first consider the case $\Theta \in [\pi/2,\, (n-2)\pi/2),\, n \geq 4$. Let all $a_i = A < 0$ to be chosen. Then for any choice of $\lambda > 0$ we have 
$$F(D^2u) = (n-4)\frac{\pi}{2} + 2\tan^{-1}(\lambda) + (n-3) \tan^{-1}(A) + \epsilon^2.$$ 
For $0 < \delta < [(n-2)\pi/2 - \Theta]/2$ arbitrarily small, we can take $\lambda$ so large that $2\tan^{-1}\lambda = \pi - \delta$. To ensure that $F(D^2u) = \Theta$ we then need to take $A$ and $\epsilon$ such that
$$(n-3)\tan^{-1}(A) + \epsilon^2 = \Theta -  (n-2)\pi/2 + \delta \in [-(n-3)\pi/2 + \delta,\, -\delta),$$
i.e. for $\tilde{\Theta} := (\Theta -  (n-2)\pi/2 + \delta)/(n-3)$,
$$\tan^{-1}(A) + \epsilon^2/(n-3) = \tilde{\Theta} \in [-\pi/2 + \delta/(n-3),\, -\delta/(n-3)).$$
It is clear that for all $\epsilon > 0$ small we can take $A = \tan(\tilde{\Theta}) + O(\epsilon^2) \in [-c^{-1}(\delta),\, -c(\delta)]$ so that this is satisfied. Since 
$$D^2u \geq -(\lambda^{-1}+C(n,\,\delta)\epsilon)I,$$
we conclude by taking $\delta$, then $\epsilon$ arbitrarily small that when $\Theta \geq \frac{\pi}{2}$ is subcritical, no negative lower bound on the Hessian will lead to a regularity result.

We now consider the case $\Theta \in (-(n-2)\pi/2,\, \pi/2),\, n \geq 3$. We take all $a_i = \lambda > 0$, so that
$$F(D^2u) = (n-1)\tan^{-1}(\lambda) - (n-2)\frac{\pi}{2} + \epsilon^2.$$
To ensure that $F(D^2u) = \Theta$ we need
$$\tan^{-1}(1/\lambda) - \epsilon^2/(n-1) = \theta := (\pi/2-\Theta)/(n-1) \in (0,\,\pi/2).$$
For $\epsilon > 0$ arbitrarily small we can take $-\lambda^{-1} = -\tan(\theta) + O(\epsilon^2)$ such that this holds.
This means that the smallest eigenvalue $\mu = -\lambda^{-1} + O(\epsilon)$ of $D^2u$ satisfies
$$\mu \geq -\tan(\theta) + O(\epsilon).$$
Since $\epsilon > 0$ is arbitrary, this shows that the lower bound required on the Hessian in Theorem \ref{thm:reg} is sharp.


\section{Proof of Theorem \ref{thm:Ex2} (Sharpness of effective bound)}\label{Ex2Proof}

{\bf The case $n = 2,\, \Theta = \pi/2$.} Let $g$ denote the Legendre transform of $\cosh$, i.e. 
$$g(s) = s\sinh^{-1}(s) - \sqrt{1+s^2}.$$
For $M > 1$ let
$$u(x,\,y) := e^{-M}\cos(y)g(e^Mx/\cos(y))$$
in the square $Q := (-1,\,1)^2 \subset \mathbb{R}^2$. Below $C$ will denote a constant independent of $M$. It is straightforward to check by hand that
$$\det D^2u = 1,$$
that
$$|u_x| = |g'(e^Mx/\cos(y))| \leq CM, \quad |u_y| = |\sin(y)|(e^{-2M} + x^2/\cos^2(y))^{1/2} \leq C,$$
and that
$$u_{xx}(0,\,0) = e^M,$$
proving that exponential dependence of Hessian estimates on $\|Du\|_{L^{\infty}}$ is optimal in the case $n = 2,\, \Theta = \pi/2$. For the calculation, the relations
$$sg' - g = (1+s^2)^{1/2}, \quad g'' = (1+s^2)^{-1/2}$$
are useful.

\begin{rem}
The function $u$ is obtained by taking the partial Legendre transform in the $x$ direction of the harmonic function $e^{-M}\cosh(x)\cos(y)$ in a strip of vertical length $\sim 1$ and horizontal length $\sim M$.
\end{rem}

\vspace{3mm}

{\bf The case $n = 2,\, \Theta \neq 0$.} Fix $\theta \in (-\pi/2,\,\pi/2)$, and let
$$s := \sin(\theta), \quad c := \cos(\theta).$$
From the calculation
$$u_{yy} = (e^{-2M}\cos^2(y) + x^2)^{1/2} + x^2\tan^2(y)(e^{-2M}\cos^2(y) + x^2)^{-1/2}$$
we see that there exist $r_1,\, r_2 > 0$ depending only on $\theta$ such that 
$$T(x,\,y) := (x,\, cy + su_y)$$
maps $r_1Q$ diffeomorphically onto a region containing $r_2Q$ for all $M > r_1^{-1}$. After a rotation of $\mathbb{R}^4$ defined by
$$(x,\,y,\,z,\,w) \mapsto (x,\, cy + sw,\, z,\, -sy + cw),$$
the graph $\{(x,\,y,\,u_x,\,u_y)\}$ of $\nabla u$ is represented by the graph over $r_2Q$ of a new potential $\bar{u}$ defined by
$$\nabla \bar{u}(x,\, cy + su_y) = (u_x,\, -sy + cu_y).$$
That is, we take a partial Legendre-Lewy-Wang-Yuan transform of $u$ in the $y$ variable. We claim that
\begin{equation}\label{PartialRot}
F(D^2\bar{u}) = \pi/2 - \theta.
\end{equation}
Since it is patently true that
$$|\nabla \bar{u}| \leq CM,$$
and a short calculation shows that
$$\bar{u}_{11}(0,\,0) = u_{xx}(0,\,0) = e^M,$$
this demonstrates the optimality of exponential dependence of interior Hessian estimates on $\|Du\|_{L^{\infty}}$ for all non-zero phases in $\mathbb{R}^2$. Such estimates were proven by Warren and Yuan in \cite{WY2}.

To verify (\ref{PartialRot}), it easier to check that its other form
$$c(1-\det D^2\bar{u}) = s\Delta \bar{u}$$
is satisfied. This can be done by differentiating the defining expression for $\nabla \bar{u}$ and using that $\det D^2u = 1$.

\vspace{3mm}

{\bf Optimality of effective bound in Theorem \ref{thm:reg}.} Take $\theta \in (0,\, \pi/2)$ and $\bar{u}$ as above. From the equation for $\bar{u}$ it is obvious that $D^2 \bar{u} > -\tan(\theta) I$. Now let
$$w := \bar{u}(x_1,\,x_2) - \tan(\theta) \sum_{i > 2} x_i^2/2$$
in $\mathbb{R}^n$. Then
$$F(D^2w) = \Theta := \pi/2 - (n-1)\theta,$$
and $w$ satisfies the semi-convexity condition we assume in Theorem \ref{thm:reg}. By the behavior of $\bar{u}$ as $M$ gets large, the exponential effective bound (\ref{HessianBound}) is optimal. (Note that by varying $\theta$ we can arrange that $F(D^2w)$ is anything in $(-(n-2)\pi/2,\,\pi/2)$.)

\vspace{3mm}

{\bf The case $u$ convex, $\Theta \geq \pi/2$.} For $\theta \in (-\pi/2,\,0],\, A \geq 0,$ let
$$w := \bar{u}(x_1,\,x_2) + A \sum_{i > 2} x_i^2/2.$$
Then $w$ is convex, and $F(D^2w) = \pi/2 - \theta + (n-2)\tan^{-1}(A)$. By varying $\theta$ and $A$ in the ranges above we can arrange that $F(D^2w)$ is anything in $[\pi/2,\, n\pi/2)$. Again, by the behavior of $\bar{u}$ as $M$ gets large, exponential dependence of Hessian estimates on $\|Du\|_{L^{\infty}}$ is the best one can expect for convex solutions to (\ref{eq}) with phase in $[\pi/2,\, n\pi/2)$ in any dimension $n \geq 2$.






\end{document}